\documentclass[12pt,a4paper]{amsart}
\usepackage{algorithm}
\usepackage{algorithmic}
\usepackage{amssymb}
\usepackage{eucal}
\usepackage{graphicx}
\usepackage{amsmath}
\usepackage{amscd}
\usepackage[all]{xy}           
\usepackage{tikz}
\usepackage{tikz-qtree}
\usepackage{amsfonts,latexsym}
\usepackage{xspace}
\usepackage[T1]{fontenc}
\usepackage{tikz-cd}
\usepackage{txfonts}

\usepackage{epsfig}
\usepackage{float}
\usepackage{color}
\usepackage{fancybox}
\usepackage{forest} 
\usepackage{colordvi}
\usepackage{multicol}
\usepackage{colordvi}
\ifpdf
\usepackage[colorlinks,final,backref=page,hyperindex]{hyperref}
\else
\usepackage[colorlinks,final,backref=page,hyperindex]{hyperref}
\fi
\usepackage[active]{srcltx} 
\usepackage{mathrsfs} 
\usepackage{diagbox}

\newcommand{\nc}{\newcommand}
\newcommand{\delete}[1]{}

\nc{\mlabel}[1]{\label{#1}}  
\nc{\mcite}[1]{\cite{#1}}  
\nc{\mref}[1]{\ref{#1}}  
\nc{\meqref}[1]{~\eqref{#1}} 
\nc{\mbibitem}[1]{\bibitem{#1}} 

\delete{
\nc{\mlabel}[1]{\label{#1}  
{\hfill \hspace{1cm}{\bf{{\ }\hfill(#1)}}}}
\nc{\mcite}[1]{\cite{#1}{{\bf{{\ }(#1)}}}}  
\nc{\mref}[1]{\ref{#1}{{\bf{{\ }(#1)}}}}  
\nc{\meqref}[1]{~\eqref{#1}{{\bf{{\ }(#1)}}}} 
\nc{\mbibitem}[1]{\bibitem[\bf #1]{#1}} 
}

\newtheorem{thm}{Theorem}[section]
\newtheorem{lem}[thm]{Lemma}

\newtheorem{prop}[thm]{Proposition}

\newtheorem{lm}[thm]{Lemma}

\theoremstyle{definition}
\newtheorem{defi}[thm]{Definition}
\newtheorem{ex}[thm]{Example}

\nc{\tred}[1]{\textcolor{red}{#1}}
\nc{\tblue}[1]{\textcolor{blue}{#1}}
\nc{\tgreen}[1]{\textcolor{green}{#1}}
\nc{\tpurple}[1]{\textcolor{purple}{#1}}
\nc{\btred}[1]{\textcolor{red}{\bf #1}}
\nc{\btblue}[1]{\textcolor{blue}{\bf #1}}
\nc{\btgreen}[1]{\textcolor{green}{\bf #1}}
\nc{\btpurple}[1]{\textcolor{purple}{\bf #1}}

\makeatletter

\newcommand*\bigcdot{\mathpalette\bigcdot@{.5}}
\newcommand*\bigcdot@[2]{\mathbin{\vcenter{\hbox{\scalebox{#2}{$\m@th#1\bullet$}}}}}
\makeatother

\newtheorem{innercustomgeneric}{\customgenericname}
\providecommand{\customgenericname}{}
\newcommand{\newcustomtheorem}[2]{%
\newenvironment{#1}[1]
{%
\renewcommand\customgenericname{#2}%
\renewcommand\theinnercustomgeneric{##1}%
\innercustomgeneric
}
{\endinnercustomgeneric}
}
\newcustomtheorem{customthm}{Theorem}

\theoremstyle{definition}

\newtheorem{remark}[thm]{Remark}

\nc{\name}[1]{{\bf #1}}

\nc{\ot}{\otimes}
\nc{\bfk}{\mathbf{k}}
\nc{\id}{\mathrm{id}}
\nc{\calf}{\mathcal{F}}
\nc{\calp}{\mathcal{P}}
\nc{\calq}{\mathcal{Q}}

\allowdisplaybreaks

\newcommand{\N}{\mathbb{N}}
\newcommand{\Z}{\mathbb{Z}}

\nc{\Res}{\mathrm{Res}}

\def \ra {\rightarrow}

\def\<{\langle}
\def\>{\rangle}

\nc{\vspa}{\vspace{-.1cm}}
\nc{\vspb}{\vspace{-.2cm}}
\nc{\vspc}{\vspace{-.3cm}}
\nc{\vspd}{\vspace{-.4cm}}
\nc{\vspe}{\vspace{-.5cm}}

\nc{\mim}{\mathrm{im}}

\nc{\supp}{\text{supp}}
\nc{\Map}{\text{Map}}
\nc{\CSG}{\text{CSG}}

\nc{\fncmna}{$Nov_A^{\rm NC}[X]$\xspace}	

\nc{\CM}{\text{CM}}		
\nc{\KCM}{\text{KCM}}	%

\nc{\mda}{multi-differential algebra}	
\nc{\mdas}{multi-differential algebras}	
\nc{\Mda}{Multi-differential algebra}	
\nc{\Mdas}{Multi-differential algebras}	

\nc{\cmdca}{commuting multi-differential commutative algebra\xspace}
\nc{\cmdnca}{commuting multi-differential noncommutative algebra\xspace}
\nc{\ncmdca}{noncommuting multi-differential commutative algebra\xspace}
\nc{\ncmdnca}{noncommuting multi-differential noncommutative algebra\xspace}

\nc{\cmdcas}{commuting multi-differential commutative algebras\xspace}
\nc{\cmdncas}{commuting multi-differential noncommutative algebras\xspace}
\nc{\ncmdcas}{noncommuting multi-differential commutative algebras\xspace}
\nc{\ncmdncas}{noncommuting multi-differential noncommutative algebras\xspace}

\nc{\Cmdca}{Commuting multi-differential commutative algebra\xspace}
\nc{\Cmdnca}{Commuting multi-differential noncommutative algebra\xspace}
\nc{\Ncmdca}{Noncommuting multi-differential commutative algebra\xspace}
\nc{\Ncmdnca}{Noncommuting multi-differential noncommutative algebra\xspace}

\nc{\Cmdcas}{Commuting multi-differential commutative algebras\xspace}
\nc{\Cmdncas}{Commuting multi-differential noncommutative algebras\xspace}
\nc{\Ncmdcas}{Noncommuting multi-differential commutative algebras\xspace}
\nc{\Ncmdncas}{Noncommuting multi-differential noncommutative algebras\xspace}

\nc{\cmdcac}{\rm{CMDCA}\xspace}	
\nc{\cmdncac}{\rm{CMDNCA}\xspace}	
\nc{\ncmdcac}{\rm{NCMDCA}\xspace}	
\nc{\ncmdncac}{\rm{NCMDNCA}\xspace} 

\nc\MNE{\mathrm{MNE}_{\Omega}(\Omegax)}
\nc\fmna{\mathrm{Nov}_{\Omega}(\Omegax)}
\nc\fmma{\mathrm{Mag}_{\Omega}(\Omegax)}
\nc\fmdc{\mathrm{CD}_{\Omega}(\Omegax)}
\nc\fmdca{\mathrm{PCD}_{\Omega}(\Omegax)}

\nc\Omegax{X}
\nc\NE{\text{NE}(\Omegax)}
\nc\fnp{\triangleright} 
\nc\fna{\mathrm{Nov}(\Omegax)}
\nc\fma{\mathrm{Mag}(\Omegax)}
\nc\fdc{\mathrm{CD}(\Omegax)}
\nc\fdca{\mathrm{CD}(\Omegax)_0}
\nc{\PBTN}{\mathrm{PBT}(\Omegax)}
\nc{\PMTN}{\mathrm{PMT}(\Omegax)}
\nc{\BTN}{\mathrm{BT}(\Omegax)}
\nc{\MTN}{\mathrm{MT}(\Omegax)}

\nc{\Mag}{\text{Mag}}
\nc{\Nov}{\text{Nov}}
\nc{\I}{\text{I}}
\nc{\PBT}{\text{PBT}}
\nc{\PMT}{\text{PMT}}
\nc{\BT}{\text{BT}}
\nc{\MT}{\text{MT}}

\nc{\cmna}{multi-Novikov algebra\xspace}
\nc{\cmncna}{multi-noncommutative Novikov algebra\xspace}
\nc{\ncmna}{noncommuting multi-Novikov algebra\xspace}
\nc{\ncmncna}{noncommuting multi-noncommutative Novikov algebra\xspace}

\nc{\cmdas}{multi-differential algebras\xspace}
\nc{\cmnas}{multi-Novikov algebras\xspace}
\nc{\cmncnas}{multi-noncommutative Novikov algebras\xspace}
\nc{\ncmnas}{noncommuting multi-Novikov algebras\xspace}
\nc{\ncmncnas}{noncommuting multi-noncommutative Novikov algebras\xspace}

\nc{\Cmna}{Multi-Novikov algebra\xspace}
\nc{\Cmncna}{Multi-noncommutative Novikov algebra\xspace}
\nc{\Ncmna}{Noncommuting multi-Novikov algebra\xspace}
\nc{\Ncmncna}{Noncommuting multi-noncommutative Novikov algebra\xspace}

\nc{\Cmnas}{Multi-Novikov algebras\xspace}
\nc{\Cmncnas}{Multi-noncommutative Novikov algebras\xspace}
\nc{\Ncmnas}{Noncommuting multi-Novikov algebras\xspace}
\nc{\Ncmncnas}{Noncommuting multi-noncommutative Novikov algebras\xspace}

\nc{\ad}{\mathrm{ad}}

\nc{\freecmdiff}{F_{\mathrm{CDiff}}^{C}(X)}	

\nc{\cmdiffpoly}{\bfk_{\Omega}^{\mathrm{C}}\{X\}}	
\nc{\mapfinsupp}{\mathrm{Map}^{\mathrm{fs}}}
\nc{\NC}{\mathrm{NC}}
\nc{\ncmder}{\Delta_{\NC,\Omega}}
\nc\NMNE{ \text{\rm MNE}_{\text{\rm NC},\Omega}(\Omegax)}
\nc\fnmna{ \text{\rm Nov}_{\text{\rm NC},\Omega}(\Omegax)}
\nc\fnmdc{ \bfk_{\text{\rm NC},\Omega}\{\Omegax\}}
\nc\fnmdca{ \text{\rm PCD}_{\text{\rm NC},\Omega}(\Omegax)}

\begin{document}

\title[General multi-Novikov algebras]{General multi-Novikov algebras, multi-differential algebras and their free constructions}

\author{Xiaoyan Wang}
\address{Department of School of Mathematical Sciences, East China Normal University, Shanghai 200241, China}
\email{wangxy@math.ecnu.edu.cn}

\author{Li Guo}
\address{
Department of Mathematics and Computer Science,
Rutgers University,
Newark, NJ 07102, United States}
\email{liguo@rutgers.edu}

\author{Huhu Zhang}
\address{School of Mathematics and Statistics
Yulin University,
Yulin, Shaanxi 719000, China}
\email{huhuzhang@yulinu.edu.cn}

\date{\today}

\begin{abstract}
Motivated by the recent development of noncommutative Novikov algebras and multi-Novikov algebras from the study of regularity structures of stochastic PDEs, this paper gives a general approach to study various multi-Novikov algebras and multi-differential algebras, with close connection with Poisson algebras. The construction of S. Gelfand of Novikov algebras from differential commutative algebras is generalized to this context. Free noncommuting multi-Novikov algebras are constructed from typed decorated rooted trees and from noncommuting multi-differential polynomials with populated conditions. 
\end{abstract}

\subjclass[2020]{
17A30, 
17A50, 
12H05, 
17D25, 
17B63, 
17A36, 
16W25 
}

\keywords{Novikov algebra, differential algebra, multi-Novikov algebra, multi-differential algebra, Poisson algebra, free object, multi-index}

\maketitle

\vspace{-1.3cm}

\tableofcontents

\vspace{-1.5cm}

\allowdisplaybreaks

\section{Introduction}
\mlabel{s:intro}
This paper studies variations of multi-Novikov algebras and their inductions from multi-differential algebras with not necessarily commuting derivations. Free noncommuting multi-Novikov algebras are constructed by typed decorated rooted trees and multi-indices. 

\subsection{Novikov algebras and differential algebras}
\mlabel{ss:diffalg}

\subsubsection{Novikov algebras} 
The structure of Novikov algebras first appeared in the study of Hamiltonian operators in connection with formal calculus of variations~ \mcite{GD1} and then independently discovered in the context of classification of linear Poisson brackets of hydrodynamical type \mcite{BN}. The term Novikov algebras was coined by  Osborn in 1992 \mcite{O1}. 

A \textbf{Novikov algebra} is a nonassociative algebra $(A,\circ)$ where the multiplication $\circ$ satisfies the following {\bf left-symmetric} and {\bf right-commutative} identities:
\vspb
\begin{eqnarray}
\mlabel{eq:n1}
(x\circ y)\circ z-x\circ(y\circ z)&=&(y\circ x)\circ z-y\circ(x\circ z), \\
\mlabel{eq:n2}
(x\circ y)\circ z&=&(x\circ z)\circ y, \quad x, y, z\in A.
\end{eqnarray}
Denoting $(x,y,z):=(x\circ y)\circ z-x\circ (y\circ z)$, then the left-symmetric identity means $(x,y,z)=(y,x,z)$.
If only the left-symmetric property Eq. \meqref{eq:n1} is satisfied, then the algebra is called the {\bf pre-Lie algebra} (or the {\bf left-symmetric algebra}). 

Geometrically, a Novikov algebra corresponds to a left-invariant torsion-free flat connection of the Lie group whose Lie algebra is isomorphic to the commutator Lie algebra of the Novikov algebra\,\mcite{K}. Novikov algebras arose in many areas of mathematics and physics, especially in the recent study of stochastic PDEs and numerical methods\,\mcite{BrD,BEH,BrH}. Lately, Novikov algebras have been studied on the level of operads~\mcite{KSO}, and Hopf algebra of decorated multi-indices~\cite{DIU,Fo2,ZGM}.

Classification and construction problems on Novikov algebras have been studied for several decades. Zelmanov~\mcite{Ze} proved that any finite-dimensional simple Novikov algebra over an algebraically closed field with characteristic $0$ is one dimensional. Hence he answered  Novikov’s question that there are no nontrivial simple finite dimensional Novikov algebras over an algebraically closed field of characteristic 0~\mcite{No}. Osborn classified simple Novikov algebras with an idempotent element and some modules over such algebras \mcite{O2}.  In 1992, he classified infinite dimensional simple Novikov algebras with nilpotent elements over a field of characteristic zero and finite dimensional simple Novikov algebras with nilpotent elements over a field of characteristic $p>0$ \mcite{O3}.  Later, a series of classification were given by Xu, including a complete classification of finite-dimensional simple Novikov algebras and their irreducible modules over an algebraically closed field with prime characteristic and the classification of simple Novikov algebras over an algebraically closed field of characteristic zero \mcite{X1,X3}. He also introduced the Novikov-Poisson algebra in order to study the tensor theory of Novikov algebras and proved that any known simple Novikov algebra can be viewed as the Novikov algebraic structure of a Novikov-Poisson algebra~\mcite{X2}. Bai and Meng carried out a series of research on low dimensional Novikov algebras, such as the structure and classification\,\mcite{BM1,BM2,BM3}. 
In 2011, Burde and Graaf gave a systematic method to classify Novikov algebras with a given associated Lie algebra\,\mcite{BG}.

\subsubsection{Novikov algebras from differential algebras}

A classical construction of a Novikov algebra is given by S. Gelfand~\mcite{GD1}. 
Let $(A,\cdot)$ be a commutative associative algebra equipped with a derivation $D$, that is $(A,\cdot,D)$ is a differential commutative algebra\,\mcite{Ko,PS}. Define
\vspb
\begin{equation}
a\circ b:= a\cdot D(b), \ a, b\in A.
\mlabel{eq:diffprod}
\end{equation}
Then $(A,\circ)$ is a Novikov algebra. 

Gelfand's construction is of fundamental importance in the theory of Novikov algebras. All complex Novikov algebras in dimensions no more than three and many important infinite-dimensional simple Novikov algebras can be realized by this construction or its linear deformations~\mcite{BM,BM1,X3}. It is eventually shown that every Novikov algebra is isomorphic to a subalgebra of some Novikov algebra obtained this way~\mcite{BCZ,DL}. Moreover, Gelfand's construction provides a natural right Novikov algebra structure on the vector space of Laurent polynomials~\mcite{HBG}, which is closely related with Novikov algebra affinization \mcite{BN}. 

\subsubsection{Free Novikov algebras}

Due to the importance of free objects, free Novikov algebras were studied. In 2002, Dzhumadil'daev and L\"ofwall obtained a basis for the free Novikov algebra using trees and the free differential commutative algebra~\mcite{DL}. 
In 2018, Tuniyaz, Bokut, Xiryazidin and Obul have found a magmatic Gr\"obner-Shirshov basis
of a free Novikov algebra over any field (or commutative ring) such that the corresponding irreducible words are the Dzhumadil'daev-Löfwall words~\mcite{TBXO}. 
In 2019, Zhang, Chen and Bokut obtained bases for Novikov superalgebras over a field of characteristic zero~\mcite{ZCB}. Free Novikov algebras have been applied to the study of stochastic PDEs\,\mcite{BrD,ZGM}.

\subsection{New notions related to Novikov algebras and differential algebras}
In addition to all the developments on Novikov algebras, there have been generalizations of Novikov algebras in recent years. 

One of the generalizations was given in 2023. Extending S. Gelfand's construction of Novikov algebras from differential commutative algebras, Sartayev and Kolesnikov introduced the notion of noncommutative Novikov algebras and showed that they can be induced from differential algebras in which the multiplication is not commutative\,\mcite{SK}. Free noncommutative Novikov algebras were constructed~\mcite{DS}. Some recent studies on other structures related to Novikov algebras can be found in~\mcite{AS,DS2,GGHZ,KMS,KN,ZZWG,ZC}

Another generalization was also given in 2023. Motivated by the landmark work of M. Hairer and coauthors on regularity structures in stochastic PDEs~\mcite{BHZ,Ha}, Bruned and Dotsenko introduced (commutative) multi-Novikov algebras, a generalization of Novikov algebras with multiple binary operations indexed by a given set~\mcite{BrD}.  
They showed that the multi-indices introduced in the context of singular stochastic PDEs can be interpreted as free multi-Novikov algebras. 
Here a critical role is played by generalizing the classical construction of S. Gelfand to commutative algebras with multiple commuting derivations. The induced multi-Novikov algebras are used to study stochastic PDEs~\mcite{BrD} following the theory of regularity structures invented by M.~Hairer~\mcite{Ha,BHZ}.	

\subsection{Layout of the paper}
Inspired by these recent developments on Novikov algebras and their generalizations, it is desirable to organize them in a uniform framework. For this purpose, we give a systematic study on the variations of multi-Novikov algebras that can be induced from multi-differential algebras, without assuming that the derivations are commuting,  or that the multiplications are commutative. These variations of multi-Novikov algebras are shown to be induced from these more general multi-differential algebras. Properties and examples of these algebras are given. The commonly studied Poisson algebras are special cases of the new kinds of multi-differential algebras and multi-Novikov algebras. 

Free objects in the corresponding categories are constructed, generalizing the previous constructions of free Novikov algebras and multi-Novikov algebras. Furthermore, by working in the context of operads and as applications of free multi-differential algebras, we show that these newly introduced variations of Novikov algebras, as well as the multi-Novikov algebras, in particular Novikov algebras, are the exact induced structures of multi-differential algebras with various commutativity conditions relaxed. 

The overall layout of the paper is as follows. 

In Section~\mref{s:mda}, we first define various multi-differential algebras, where the derivations might be commuting or noncommuting, and the algebra can be commutative or noncommutative. These conditions lead to four types of multi-differential algebras: \cmdcas, \cmdncas, \ncmdcas, and \ncmdncas. Natural examples from calculus and algebra are provided. Especially, the notion of Poisson algebras can be interpreted in terms of noncommuting multi-differential algebras. The free objects in the corresponding four categories are constructed.

In Section~\mref{s:mnov}, we introduce the notions of \cmncnas, \ncmnas, and \ncmncnas.  They are shown to be induced from the four types of multi-differential algebras studied in Section~\mref{s:mda}, as depicted in the diagram
\begin{equation}
\tiny{
\begin{split}
\xymatrix{
\txt{ commuting \\multi-differential\\ commutative algebras} \ar@2{->}[d] &\txt{noncommuting\\ multi-differential\\ commutative algebras} \ar@2{->}[d]&\txt{commuting\\ multi-differential\\ noncommutative algebras} \ar@2{->}[d]&\txt{noncommuting\\ multi-differential\\ noncommutative algebras} \ar@2{->}[d]\\
\txt{multi-Novikov algebras}
&\txt{noncommuting\\ multi-Novikov algebras} &\txt{multi-noncommutative \\Novikov algebras}  
& \txt{noncommuting\\ multi-noncommutative\\ Novikov algebras} 
}
\end{split}}
\mlabel{e:inddiag}
\end{equation}

In Section~\mref{s:fnmna}, we give two explicit constructions of free \ncmnas from free \ncmdcas and from rooted trees, generalizing or expanding the constructions of Dzhumadil'daev-L\"ofwall~\mcite{DL} and Bruned-Dotsenko\,\mcite{BrD}. The main theorem is Theorem\,\mref{t:fncmncasubnov}. 

\smallskip

\noindent
{\bf Notations.} In this paper, we let $\N$ be the set of natural numbers (including $0$), $\N^+$ be the set of positive natural numbers. Let $\bfk$ be a given ground field for all the vector spaces, algebras, linear maps and tensor products. By a (commutative) algebra we mean an associative (commutative) algebra.  

\section{Multi-differential algebras and their free objects}
\mlabel{s:mda}
In this section, we introduce the notions of differential commutative or noncommutative algebras with multiple derivations which are commuting or noncommuting, leading to the notions of \cmdncas, \ncmdcas and \ncmdncas.
We then construct the free objects in the corresponding categories.

\subsection{Notions and examples of \cmdas}
\mlabel{ss:mdas}
We begin with recalling the notion of differential algebras. 

A {\bf derivation} on a not necessarily associative algebra $(R,\cdot)$ is a linear map $d:R\to R$ that satisfies the {\bf Leibniz rule}:
\begin{equation}
d(x\cdot y)=d(x)\cdot y+x\cdot d(y), \quad x, y\in R. 
\mlabel{eq:der}
\end{equation}

The classical notion of a \textbf{differential algebra}~\mcite{Ko,PS,Ri} is an algebra $(R, \cdot)$ equipped with a derivation $d: R\ra R$. Often the algebra is assumed to be a commutative algebra or a field. A differential algebra might also have multiple commuting derivations. 

We now introduce a framework to include several structures with multiple derivations. 

\begin{defi}
Let $\Omega$ be a set. Denote $\partial_\Omega:= \{\partial_\omega~|~\omega \in \Omega\}$.
\begin{enumerate} 
\item \mcite{BrD} A \textbf{\cmdca} indexed by $\Omega$ is a commutative algebra $R$ equipped with derivations  $\partial_\Omega$ which are pairwise commuting. 
\item A \textbf{\cmdnca} indexed by $\Omega$ is a noncommutative algebra $R$ equipped with derivations $\partial_\Omega$ which are pairwise commuting.  	
\item A \textbf{\ncmdca} indexed by $\Omega$ is a commutative algebra $R$ equipped with derivations $\partial_\Omega$ which are not necessarily pairwise commuting.  	
\item A \textbf{\ncmdnca} indexed by $\Omega$ is a noncommutative algebra $R$ equipped with derivations $\partial_\Omega$ which are not necessarily pairwise commuting.  	
\end{enumerate}
\mlabel{d:mdas}
\end{defi}
These structures can be organized in the following table. 
\begin{small}
\begin{table}[h]
\centering
\begin{tabular}{|c|c|c|}
\hline
\diagbox{multiplication}{derivations}&\text{commuting}&\text{noncommuting}\\
\hline
\text{commutative}&\txt{commuting\\ multi-differential\\ commutative algebras\\(CMDCAs)}&\txt{noncommuting\\ multi-differential\\ commutative algebras\\(NCMDCAs)}\\
\hline
\text{noncommutative}&\txt{commuting\\ multi-differential\\ noncommutative algebras\\(CMDNCAs)}&\txt{noncommuting\\ multi-differential\\ noncommutative algebras\\(NCMDNCAs)}\\
\hline
\end{tabular}
\end{table}
\end{small}

\begin{defi}
Let $(R, \partial_\Omega)$ and $(S, \delta_\Omega)$ be \cmdcas. A \textbf{homomorphim} from $(R, \partial_\Omega)$ to $(S, \delta_\Omega)$ is an algebra homomorphism $f: R \rightarrow S$ such that $f\circ \partial_\omega=\delta_\omega \circ f$ for all $\omega\in \Omega$. Let \cmdcac denote the category of \cmdcas.

The same notion can be defined for the other types of multi-differential algebras in Definition \mref{d:mdas}. We let \cmdncac, \ncmdcac and \ncmdncac denote the corresponding categories.
\end{defi}

We give some examples of multi-differential algebras. 

\begin{ex}
\begin{enumerate}
\item Let $\bfk[X]$ be a polynomial and let $\partial_X:=\{\partial_x:=\frac{\partial}{\partial x}\,|\,x\in X\}$	be the set of partial derivatives. Then $(\bfk[X],\partial_X)$ is a commuting multi-differential commutative algebra. Likewise, $(\bfk\langle X\rangle, \partial_X)$ is a commuting multi-differential noncommutative algebra. 
\item 
Let $\bfk[x]$ be the polynomial algebra in variable $x$. As is well-known, the set
$$\mathcal{D}:=\bigg\{f(x)\frac{d}{dx}\,\bigg|\,f(x)\in \bfk[x]\bigg\}$$ 
is a family of derivations on $\bfk[x]$ that might not be pairwise commuting. For a nonempty set $\Omega$ and a set map 
$\partial:\Omega\to \mathcal{D}$, denote $\partial_\Omega:=\Big\{\partial_\omega:=\partial(\omega)\,\Big|\,\omega\in \Omega\Big\}$. Then we obtain a noncommuting multi-differential algebra $(\bfk[x],\partial_\Omega)$. 
\mlabel{i:pab}
\item More generally, for a commutative algebra $A$, let $\text{Der}(A)\subseteq \text{End}(A)$ be the set of derivations on $A$. Then $\text{Der}(A)$ is a Lie algebra with respect to the bilinear product $$[,]:\text{Der}(A)\otimes \text{Der}(A) \ra\text{Der}(A), [d_1,d_2]=d_1\circ d_2-d_2\circ d_1,\quad d_1,d_2\in \text{Der}(A).$$ 
For a given set $\Omega$ and a map $\partial: \Omega \ra \text{Der}(A)$. Define $$\partial_{\Omega}:=\{\partial_\omega:=\partial(\omega)\,|\,\omega\in \Omega\}.$$ 
Then $(A,\partial_{\Omega})$ is a \ncmdca. In fact, every \ncmdca can be realized this way. 
\mlabel{i:pa1}
\end{enumerate}
\mlabel{e:pa}
\end{ex}

We next show that a Poisson algebra can be naturally interpreted as a noncommuting multi-differential algebra. 

Recall that a {\bf Poisson algebra} is a $\bfk$-module equipped with bilinear products $\cdot$ and $[,]$, such that $(P,\cdot)$ is a commutative algebra, $(P,[,])$ is a Lie algebra, and 
\begin{equation}
[x,y\cdot z]=[x,y]\cdot z+y\cdot [x,z], \quad x, y, z\in P.
\mlabel{eq:poisleib}
\end{equation}

\begin{prop}
Let $(A,\cdot)$ be a commutative algebra equipped with a Lie bracket $[\,,\,]$. 
Consider the adjoint action
$$\ad:A \ra \mathrm{End}(A), \quad  \ad(x)(y):=\ad_x(y)=[x,y], \quad x, y\in A.$$
Then the triple $(A,\cdot,[\,,\,])$ is a Poisson algebra if and only if the triple $(A,\cdot,\ad(A))$ is a noncommuting multi-differential algebra. 
\mlabel{p:poisson}
\end{prop}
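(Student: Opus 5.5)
The plan is to unwind both sides of the equivalence and observe that they express the same identity. We are given that $(A,\cdot)$ is a commutative algebra and $(A,[\,,\,])$ is a Lie algebra. By the definition recalled before the proposition, $(A,\cdot,[\,,\,])$ is then a Poisson algebra exactly when the Leibniz identity \meqref{eq:poisleib} holds. On the other side, take the index set $\Omega:=A$ and the map $\partial:A\to\mathrm{End}(A)$, $\omega\mapsto \ad_\omega$, in the manner of Example~\mref{e:pa}\,(\mref{i:pa1}). Then, by Definition~\mref{d:mdas}, $(A,\cdot,\ad(A))$ is a \ncmdca exactly when each $\ad_x$ is a linear map satisfying the Leibniz rule \meqref{eq:der} with respect to $\cdot$. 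No commutation condition among the $\ad_x$ is required for this.

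Each $\ad_x$ is linear because $[\,,\,]$ is bilinear. Writing out \meqref{eq:der} for $d=\ad_x$ gives
$$\ad_x(y\cdot z)=\ad_x(y)\cdot z+y\cdot \ad_x(z),$$
that is, $[x,y\cdot z]=[x,y]\cdot z+y\cdot[x,z]$ for all $y,z\in A$. Letting $x$ range over $A$, this is precisely \meqref{eq:poisleib}. So the condition ``every $\ad_x$ is a derivation of $(A,\cdot)$'' is literally the Poisson Leibniz identity, and both implications follow at once.

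For completeness I will also remark why the family must be allowed to be noncommuting. The Jacobi identity gives $[\ad_x,\ad_y]=\ad_{[x,y]}$, so $\ad(A)$ is a Lie subalgebra of $\mathrm{Der}(A)$, but the $\ad_x$ need not commute pairwise in general. The proposition therefore genuinely lands in \ncmdcac rather than \cmdcac.

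There is no real obstacle; the only point needing care is bookkeeping. One should state explicitly that the index set is $A$ (or the image $\ad(A)$) and that ``noncommuting'' means ``not necessarily commuting,'' so that no extra condition on the derivations is imposed.
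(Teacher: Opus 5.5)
Your proposal is correct and follows essentially the same route as the paper: in both, the two directions reduce to the observation that ``each $\ad_x$ is a derivation of $(A,\cdot)$'' is literally the Leibniz identity Eq.~\eqref{eq:poisleib}. The paper omits your explicit choice of index set ($\Omega=A$) and your side remark that $[\ad_x,\ad_y]=\ad_{[x,y]}$; both are harmless additions and the latter is not needed for the equivalence.
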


\begin{proof}
If $(A,\cdot,[,])$ is a Poisson algebra. Then $\ad_A=\{\ad_x\,|\,x\in A\}$ is a family of derivations on the commutative algebra $A$. This means that $(A,\ad(A))$ is a noncommuting multi-differential commutative algebra. 

Conversely, suppose that $(A,\cdot,\ad(A))$ is a noncommuting multi-differential algebra. Then the subset $\ad(A)\subseteq \text{End}(A)$ is in the subspace $\text{Der}(A)$ of $\mathrm{End}(A)$. Therefore, Eq.~\meqref{eq:poisleib} holds and $(A,\cdot,[,]_A)$ is a Poisson algebra. 
\end{proof}

\subsection{Free multi-differential algebras}
\mlabel{ss:fcmdca}	
To fix notations and provide a unified context, we first recall the constructions of free commuting multi-differential commutative and noncommutative algebras. We then give the constructions of free noncommuting multi-differential commutative and noncommutative algebras. 

For a set $Y$, let $M(Y)$ denote the free commutative monoid generated by $Y$, realized as the set of maps with finite supports:
\begin{equation}
M(Y):=\big\{\alpha:Y\to \N\,\big|\, |\supp(\alpha)|<\infty\big\}.
\end{equation}
Here the support of $\alpha$ is $\supp(\alpha):=\{y\in Y\,|\,\alpha(y)\neq 0\}$. 
The multiplication in $M(Y)$ is given by 
$$ (\alpha\cdot \beta)(y):=\alpha(y)+\beta(y), \quad y\in Y.$$
Identifying $\alpha\in M(Y)$ with 
$$\prod_{y\in \supp(\alpha)}y^{\alpha(y)}=y_1^{\alpha(y_1)}\cdots y_k^{\alpha(y_k)}$$ 
when $\supp(\alpha)=\{y_1,\ldots,y_k\}$, we obtain the usual construction of the free commutative monoid: 
\begin{equation}
M(Y):=\{ y_1^{\alpha_1}\cdots y_m^{\alpha_m}\,|\, y_i\in Y, \alpha_i\geq 1,m\geq 1\} \cup \{1\}.
\end{equation}

Now for nonempty sets $\Omega$ and $X$, define the set $\partial_{\Omega}:=\{\partial_\omega\,|\,\omega\in \Omega\}$ and define the set of {\bf variables from commuting multi-derivations} to be 
\begin{equation}
\Delta_{\Omega}(X):=M(\partial_\Omega)\times X=\bigg\{\Big( \prod_{\omega\in \supp(\alpha)}\partial_\omega^{\alpha(\omega)},x\Big)\bigg|\alpha\in M(\partial_\Omega), x\in X\bigg\}.
\mlabel{eq:mdiffvar}
\end{equation}
We also use the functional notation 
$$ \prod_{\omega\in \supp(\alpha)}\partial_\omega^{\alpha(\omega)}(x):=\Big(\prod_{\omega\in\supp(\alpha)}\partial_\omega^{\alpha(\omega)}\Big)(x):=\Big( \prod_{\omega\in \supp(\alpha)}\partial_\omega^{\alpha(\omega)},x\Big).$$

For $\omega\in \Omega$, define 
$$D_{\omega}: \Delta_{\Omega}(X)\ra \Delta_{\Omega}(X),$$ 
$$D_\omega \Big(\prod_{\tau\in\supp(\alpha)}\partial_\tau^{\alpha(\tau)}(x)\Big):=\partial_\omega\prod_{\tau\in\supp(\alpha)}\partial_{\tau}^{\alpha(\tau)}(x)=\partial_{\omega}^{\alpha(\omega)+1}\prod_{\tau\in\supp(\alpha),\tau\neq\omega}\partial_{\tau}^{\alpha(\tau)}(x).$$ 

Take the polynomial algebra 
\begin{equation}
\bfk_\Omega\{X\}:=\bfk[\Delta_\Omega(X)]=\bfk M(M(\partial_{\Omega}) \times X).
\end{equation}
Extend $D_\omega$ to $D_\omega:\bfk_\Omega\{X\} \ra \bfk_\Omega\{X\}$ by the Leibniz rule and the linearity. Let 
$$i:X\ra \bfk_\Omega\{X\}, x\mapsto (1,x),$$ 
be the natural injection. Then a fundamental result on differential algebra is the following construction of free objects.
\begin{thm} \mcite{Ko,PS}
The triple	$(\bfk_\Omega\{X\}, D_\Omega,i)$, where $D_\Omega=\{D_\omega~|~\omega\in \Omega\}$, is the free \cmdca on $X$ with commuting derivation set $D_\Omega$.
\mlabel{t:fcmdca}
\end{thm}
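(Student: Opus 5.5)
We establish the statement in three steps: the $D_\omega$ are commuting derivations on $\bfk_\Omega\{X\}$, every map from $X$ extends to a homomorphism, and that extension is unique.

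\emph{Well-definedness and commutation.} The operators $D_\omega$ are first defined on the set $\Delta_\Omega(X)=M(\partial_\Omega)\times X$ by $D_\omega(\alpha,x)=(\alpha\cdot\partial_\omega,x)$. Since $\bfk_\Omega\{X\}=\bfk[\Delta_\Omega(X)]$ is a polynomial algebra on this set, each $D_\omega$ extends uniquely to a derivation by the Leibniz rule: on a monomial $u_1\cdots u_n$ with $u_i\in\Delta_\Omega(X)$ it acts as $\sum_i u_1\cdots D_\omega(u_i)\cdots u_n$, and by commutativity this is independent of the ordering of the factors. Next, $[D_\omega,D_\tau]$ is again a derivation of the commutative algebra, so it vanishes identically as soon as it vanishes on the generators $\Delta_\Omega(X)$. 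On a generator, $D_\omega D_\tau(\alpha,x)=(\alpha\,\partial_\tau\partial_\omega,x)=D_\tau D_\omega(\alpha,x)$, because $M(\partial_\Omega)$ is commutative. Hence $(\bfk_\Omega\{X\},D_\Omega)$ is a \cmdca.

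\emph{Existence of the extension.} Let $(R,\delta_\Omega)$ be a \cmdca and $f:X\to R$ a set map. We define $\bar f$ on generators by
$$\bar f\Big(\prod_{\omega}\partial_\omega^{\alpha(\omega)}(x)\Big):=\Big(\prod_\omega\delta_\omega^{\alpha(\omega)}\Big)(f(x)).$$
This is well defined because the $\delta_\omega$ pairwise commute, so the composite depends only on the multiset $\alpha$ and not on an ordering. We then extend $\bar f$ to an algebra homomorphism using the universal property of the polynomial algebra; here we use that $R$ is commutative. By construction $\bar f\circ i=f$. To see that $\bar f\circ D_\omega=\delta_\omega\circ\bar f$, observe that both sides are $\bar f$-derivations $\bfk_\Omega\{X\}\to R$, meaning linear maps $\phi$ with $\phi(uv)=\phi(u)\bar f(v)+\bar f(u)\phi(v)$. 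Such maps are determined by their values on generators, and on a generator both sides equal $\delta_\omega\prod_\tau\delta_\tau^{\alpha(\tau)}(f(x))$.

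\emph{Uniqueness.} Any homomorphism $g$ with $g\circ i=f$ that commutes with the derivations must satisfy $g(\alpha,x)=g\big(\prod D_\omega^{\alpha(\omega)}(1,x)\big)=\prod\delta_\omega^{\alpha(\omega)}(f(x))$. Thus $g$ agrees with $\bar f$ on the generators, and therefore $g=\bar f$ as algebra homomorphisms.

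The only delicate point is well-definedness in the presence of commutativity. The definition of $\bar f$ on $\Delta_\Omega(X)$ needs the $\delta_\omega$ to commute, and the commutation of the $D_\omega$ must be checked. Both are handled by the observation that a derivation (or a relative derivation along $\bar f$) is determined by its values on generators. All remaining verifications are routine.
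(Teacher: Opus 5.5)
Your proof is correct and complete. The paper gives no proof of this theorem and simply cites the standard differential-algebra references. Its proof of the noncommuting analogue, Theorem~\ref{t:fncmdca}, follows the same outline as yours: define $\bar f$ on differential monomials, check $\bar f\circ i=f$ and $\bar f\circ D_\tau=\delta_\tau\circ\bar f$, and deduce uniqueness from generation. It differs only in checking the intertwining by expanding the Leibniz rule on a general monomial rather than using your remark that $\bar f$-derivations are determined on generators, and in leaving implicit the two commuting-case points you make explicit: that the $D_\omega$ commute, and that $\bar f$ is well defined on $\Delta_\Omega(X)$ only because the $\delta_\omega$ commute.
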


Likewise, take noncommutative polynomial algebra in variables $M(\partial_\Omega)\times X$: 
\begin{equation}
\bfk_{\Omega}^{\rm NC}\{ X\}:=\bfk \langle M(\partial_\Omega)\times X \rangle.
\mlabel{eq:diffncpoly}
\end{equation}
Again extend $D_\omega$ by the Leibniz rule and linearity to $\bfk_{\Omega}^{\rm NC}\{ X\}$. Let $i:X\ra \bfk_\Omega^{\rm NC}\{ X\}$ be the injection.
\begin{thm} \mcite{GK}
The triple	$(\bfk_{\Omega}^{\rm NC}\{ X\}, D_\Omega,i)$, where $D_\Omega=\{D_\omega~|~\omega\in \Omega\}$, is the free \cmdnca on $X$ with commuting derivation set $D_\Omega$.
\mlabel{t:fcmdnca}
\end{thm}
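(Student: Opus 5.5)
We verify three things about the triple $(\bfk_{\Omega}^{\rm NC}\{X\}, D_\Omega, i)$: each $D_\omega$ is a well-defined derivation, the $D_\omega$ pairwise commute, and the universal property holds. The algebra $\bfk\langle M(\partial_\Omega)\times X\rangle$ is the free associative algebra on the set $\Delta_\Omega(X)=M(\partial_\Omega)\times X$. Its basis consists of noncommutative words $u_1u_2\cdots u_n$ with $u_j=(\alpha_j,x_j)$, together with the empty word $1$.

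\emph{Step 1: the derivations.} Let $D_\omega$ act on a letter by raising the $\omega$-exponent, as in the definition before Theorem~\mref{t:fcmdca}. On a word, set
$$D_\omega(u_1\cdots u_n):=\sum_{j=1}^n u_1\cdots u_{j-1}D_\omega(u_j)u_{j+1}\cdots u_n,$$
put $D_\omega(1)=0$, and extend linearly. A derivation of a free associative algebra is determined by any choice of values on the generators, so this is a derivation. To check commutativity, compute $D_\omega D_\tau$ on a word. It is a sum of two kinds of terms:
\begin{itemize}
\item terms where both operators act on the same letter $u_j$, giving $\partial_\omega\partial_\tau u_j$;
\item terms where they act on distinct letters $u_j,u_k$.
\end{itemize}
The first kind is symmetric in $\omega,\tau$ because $M(\partial_\Omega)$ is a commutative monoid, so $D_\omega D_\tau=D_\tau D_\omega$ on letters. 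The second kind is visibly symmetric. Hence $D_\omega D_\tau=D_\tau D_\omega$. Alternatively, the commutator $[D_\omega,D_\tau]$ is a derivation that vanishes on generators, so it is zero.

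\emph{Step 2: universal property.} Let $(R,\delta_\Omega)$ be a commuting multi-differential noncommutative algebra and $f:X\to R$ a set map. Define $\bar f$ on letters by
$$\bar f\Big(\prod_{\omega}\partial_\omega^{\alpha(\omega)}(x)\Big):=\prod_\omega\delta_\omega^{\alpha(\omega)}(f(x)).$$
This is well defined precisely because the $\delta_\omega$ commute, so the order of composition in the product is irrelevant. Extend $\bar f$ multiplicatively to words using the universal property of the free associative algebra; this makes $\bar f$ an algebra homomorphism with $\bar f\circ i=f$.

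\emph{Step 3: compatibility and uniqueness.} To show $\bar f\circ D_\omega=\delta_\omega\circ\bar f$, it suffices to check on generators. The reason is that both $\bar f\circ D_\omega$ and $\delta_\omega\circ \bar f$ are $\bar f$-derivations from $\bfk\langle\Delta_\Omega(X)\rangle$ to $R$, that is, maps $d$ with $d(ab)=d(a)\bar f(b)+\bar f(a)d(b)$. Such maps are determined by their values on generators, by induction on word length. On a generator the equality is exactly the definition of $\bar f$ together with commutativity of the $\delta$'s.

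For uniqueness, any homomorphism $g$ of multi-differential algebras with $g\circ i=f$ must satisfy
$$g\Big(\prod\partial_\omega^{\alpha(\omega)}(x)\Big)=\prod\delta_\omega^{\alpha(\omega)}f(x).$$
This holds because, in $\bfk_\Omega^{\rm NC}\{X\}$, the letter $\prod\partial_\omega^{\alpha(\omega)}(x)$ equals $\prod D_\omega^{\alpha(\omega)}(i(x))$. So $g$ is forced on generators, hence on the whole algebra, and $g=\bar f$.

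The only point needing care is the well-definedness in Steps 1 and 2, namely that the commutative-monoid indexing matches commuting derivations on both sides; everything else is routine induction on word length.
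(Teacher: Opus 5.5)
Your proposal is correct and is essentially the standard direct verification. The paper itself only cites \cite{GK} for this statement, but it proves the noncommuting analogue, Theorem~\ref{t:fncmdca}, the same way: define $\bar f$ on letters, extend multiplicatively, check $\bar f\circ D_\omega=\delta_\omega\circ\bar f$, and argue uniqueness. Your version is more careful than that template in three respects: you verify that the $D_\omega$ pairwise commute; you note that commutativity of the $\delta_\omega$ is exactly what makes $\bar f$ well defined on letters indexed by the commutative monoid $M(\partial_\Omega)$; and you replace the explicit Leibniz expansion on monomials with the cleaner observation that two $\bar f$-derivations agreeing on generators coincide.
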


\begin{remark} We will use the convention that a superscript NC indicates noncommuative variables and a subscript NC indicates noncommuting derivations.
\mlabel{r:nc}
\end{remark}

We now consider the case when the derivations are not necesarily commuting.

First for a nonempty set $Y$, let $M_{\NC}(Y)$ be the free monoid on $Y$ realized as words:
\begin{equation}
M_{\NC}(Y):=\{y_1\cdots y_k\,|\, y_i\in Y, 1\leq i\leq k, k\geq 1\}\cup \{1\}
\mlabel{eq:ncmonoid}
\end{equation}
with the concatenation multiplication. 

Next let $\Omega$ and $X$ be nonempty sets. Let 
\begin{equation}
\partial_\Omega:=\{\partial_\omega\,|\, \omega\in\Omega\}
\mlabel{eq:domega}
\end{equation}
be a set of symbols parameterized by $\Omega$ that will play the role of multi-derivations on the free \ncmdca. 
Define the set of {\bf variables from noncommuting multi-derivations} to be 
\begin{equation}
\ncmder(X):=M_{\NC}(\partial_\Omega)\times X.
\mlabel{eq:ncmdiffvar}
\end{equation}
For $\omega\in \Omega$, define 
\begin{equation}
D_{\omega}: \ncmder(X) \ra \ncmder(X), \quad D_\omega \big((\partial_{\omega_1}\cdots \partial_{\omega_k})(x)\big):=(\partial_\omega \partial_{\omega_1}\cdots \partial_{\omega_k})(x).
\mlabel{eq:deronmulti}
\end{equation}

Take the commutative polynomial algebra 
$$\bfk_{{\rm NC},\Omega}\{X\}:=\bfk [\ncmder(X)]:=\bfk M(\ncmder(X))$$
like in Theorem~\mref{t:fcmdca}. Extend $D_\omega$ for $\omega\in \Omega$ by the Leibniz rule to a linear operator on $\bfk_{{\rm NC},\Omega}\{X\}$. Let $i: X\ra \bfk_{{\rm NC},\Omega}\{X\}, x\mapsto (1,x)$ be the natural injection.

\begin{thm}
The pair $(\bfk_{\NC,\Omega}\{X\}, D_\Omega)$ with $i: X\ra \bfk_{{\rm NC},\Omega}\{X\}$ is the free \ncmdca on $X$ with its noncommuting derivation set $D_\Omega$.
\mlabel{t:fncmdca}
\end{thm}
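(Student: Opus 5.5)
The plan is to verify the universal property directly, in the same way as the classical proof of Theorem~\mref{t:fcmdca}, with the free commutative monoid $M(\partial_\Omega)$ replaced by the free monoid $M_{\NC}(\partial_\Omega)$ of words in the symbols $\partial_\omega$.

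First I check that $(\bfk_{\NC,\Omega}\{X\},D_\Omega)$ lies in \ncmdcac. The algebra $\bfk_{\NC,\Omega}\{X\}=\bfk M(\ncmder(X))$ is a commutative polynomial algebra on the set $\ncmder(X)$. A derivation of a polynomial algebra is uniquely determined by arbitrary values on the generators. So each map $D_\omega$ of Eq.~\meqref{eq:deronmulti} extends uniquely to a derivation, explicitly
$$D_\omega(u_1\cdots u_n)=\sum_{j=1}^n u_1\cdots D_\omega(u_j)\cdots u_n, \qquad u_j\in \ncmder(X).$$
This is well defined because the right-hand side is symmetric in the $u_j$. No commutation among the $D_\omega$ is required.

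Next, let $(R,d_\Omega)$ be any \ncmdca and $f:X\to R$ a set map. On generators I define
$$\bar f\big((\partial_{\omega_1}\cdots\partial_{\omega_k})(x)\big):=d_{\omega_1}\circ\cdots\circ d_{\omega_k}(f(x)),$$
with the empty word sent to $f(x)$. Since $R$ is commutative and $\bfk_{\NC,\Omega}\{X\}$ is the free commutative algebra on $\ncmder(X)$, this extends uniquely to an algebra homomorphism $\bar f$, and $\bar f\circ i=f$.

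It remains to show $\bar f\circ D_\omega=d_\omega\circ\bar f$.
\begin{itemize}
\item On generators this holds by construction, because $D_\omega$ prepends $\partial_\omega$ to the word and $d_\omega$ is applied outermost. The order of letters in the word matches the order of composition, which is exactly why words rather than commutative monomials are needed.
\item Both $\bar f\circ D_\omega$ and $d_\omega\circ \bar f$ are $\bar f$-derivations, that is, linear maps $\delta$ with $\delta(ab)=\delta(a)\bar f(b)+\bar f(a)\delta(b)$.
\item Two $\bar f$-derivations that agree on algebra generators agree everywhere, by induction on the length of monomials. On the unit both vanish.
\end{itemize}

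For uniqueness, suppose $g$ is a homomorphism of \ncmdcas with $g\circ i=f$. Then by induction on $k$,
$$g\big((\partial_{\omega_1}\cdots\partial_{\omega_k})(x)\big)=g\big(D_{\omega_1}((\partial_{\omega_2}\cdots\partial_{\omega_k})(x))\big)=d_{\omega_1}\,g\big((\partial_{\omega_2}\cdots\partial_{\omega_k})(x)\big),$$
so $g$ agrees with $\bar f$ on generators, hence everywhere.

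No step is a serious obstacle. The only point needing care is the compatibility check, where the order convention in Eq.~\meqref{eq:deronmulti} (prepending) must be matched with composition order in $R$. The rest is the standard extension of derivations and homomorphisms from generators of a free commutative algebra.
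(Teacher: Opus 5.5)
Your proof is correct and follows the same route as the paper. Like the paper, you define $\bar f$ on the generators $(\partial_{\omega_1}\cdots\partial_{\omega_k})(x)$ by composing the derivations of $R$ in word order, extend it to an algebra homomorphism, check $\bar f\circ D_\omega=d_\omega\circ\bar f$, and get uniqueness because the values on generators are forced. The differences are cosmetic: you obtain the compatibility by noting that both sides are $\bar f$-derivations agreeing on generators, where the paper expands the Leibniz rule on a monomial directly, and your induction on word length makes explicit the uniqueness that the paper asserts ``by construction''.
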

\begin{proof}
Let a \ncmdca $(A, \delta_{\Omega})$ and a set map $f:X\to A$ be given. 
Define $\bar{f}: \bfk_{{\rm NC},\Omega}\{X\}\to A$ by 
$$\bar{f}\Big((\partial_{\omega_{1,1}}\cdots \partial_{\omega_{1,k_1}})(x_1)\cdots (\partial_{\omega_{n,1}}\cdots \partial_{\omega_{n,k_n}})(x_n)\Big):=(\delta_{\omega_{1,1}}\cdots \delta_{\omega_{1,k_1}})\big(f(x_1)\big)\cdots (\delta_{\omega_{n,1}}\cdots \delta_{\omega_{n,k_n}})\big(f(x_n)\big).$$
Then $\bar{f}$ is an algebra homomorphism. Also, for $x\in X$, we have $\bar{f}(i(x))=\bar{f}(x)=f(x)$. Hence $\bar{f}\circ i=f$. 

We next verify $\bar{f}\circ D_\tau=\delta_\tau\circ f$ for $\tau\in \Omega$. In fact, 
\begin{align*}
&(\bar{f}\circ D_\tau)((\partial_{\omega_{1,1}}\cdots \partial_{\omega_{1,k_1}})(x_1)\cdots (\partial_{\omega_{n,1}}\cdots \partial_{\omega_{n,k_n}})(x_n))\\
=&~\bar{f}\Big(\sum_{i=1}^{n}((\partial_{\omega_{1,1}}\cdots \partial_{\omega_{1,k_1}})(x_1)\cdots(\partial_\tau \partial_{\omega_{i,1}}\cdots \partial_{\omega_{i,k_i}}(x_i))\cdots (\partial_{\omega_{n,1}}\cdots \partial_{\omega_{n,k_n}})(x_n)\Big)\\
=&~\sum_{i=1}^{n}((\delta_{\omega_{1,1}}\cdots \delta_{\omega_{1,k_1}})(f(x_1))\cdots(\delta_\tau \delta_{\omega_{i,1}}\cdots \delta_{\omega_{i,k_i}}(f(x_i)))\cdots (\delta_{\omega_{n,1}}\cdots \delta_{\omega_{n,k_n}})(f(x_n))\\
=&~\delta_{\tau}((\delta_{\omega_{1,1}}\cdots \delta_{\omega_{1,k_1}})(f(x_1))\cdots (\delta_{\omega_{n,1}}\cdots \delta_{\omega_{n,k_n}})(f(x_n)))\\
=&~(\delta_{\tau}\circ \bar{f})((\partial_{\omega_{1,1}}\cdots \partial_{\omega_{1,k_1}})(x_1)\cdots (\partial_{\omega_{n,1}}\cdots \partial_{\omega_{n,k_n}})(x_n)).
\end{align*}
Therefore, $\bar{f}$ is a homomorphism of \ncmdcas.

Finally, by construction, the only way to get a \ncmdca homomorphism from $f$ is $\bar{f}$, which guarantees the uniqueness.
\end{proof}

As in the previous case, take the noncommutative polynomial algebra $$\bfk^{{\rm NC}}_{{\rm NC},\Omega}\{X\}:=\bfk\langle \ncmder(X)\rangle:=\bfk M_{{\rm NC}}(\ncmder(X))$$ 
as in Theorem~\mref{t:fcmdnca}. Extend $D_\omega$, $\omega\in \Omega$, to $\bfk^{{\rm NC}}_{{\rm NC},\Omega}\{X\}$ by the Leibniz rule. 
\begin{thm}
The pair $(\bfk^{{\rm NC}}_{{\rm NC},\Omega}\{X\}, D_\Omega)$ with the natural injection $i:X\to \bfk^{\NC}_{\NC,\Omega}\{X\}$ is the free \ncmdnca on $X$ with noncommuting derivation set $D_\Omega$.
\mlabel{t:fncmdnca}
\end{thm}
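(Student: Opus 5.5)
The plan is to follow the proof of Theorem~\mref{t:fncmdca} almost verbatim, replacing the free commutative monoid $M(\ncmder(X))$ by the free monoid $M_{\NC}(\ncmder(X))$. First I would check that the extended operators $D_\omega$ are well defined derivations on $\bfk^{\NC}_{\NC,\Omega}\{X\}$. Since $\bfk\langle \ncmder(X)\rangle$ has the noncommutative words $u_1\cdots u_n$, $u_i\in \ncmder(X)$, as a basis, we may set
$$D_\omega(u_1\cdots u_n):=\sum_{i=1}^n u_1\cdots u_{i-1}\,D_\omega(u_i)\,u_{i+1}\cdots u_n,$$
with $D_\omega(u_i)$ given by Eq.~\meqref{eq:deronmulti}, and extend linearly. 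The Leibniz rule on a product of two words then follows by splitting the sum according to which factor contains the differentiated letter. This uses only concatenation, so commutativity plays no role.

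Next, given a \ncmdnca $(A,\delta_\Omega)$ and a set map $f:X\to A$, I would define $\bar f$ on basis words by
$$\bar f\big((\partial_{\omega_{1,1}}\cdots\partial_{\omega_{1,k_1}})(x_1)\cdots(\partial_{\omega_{n,1}}\cdots\partial_{\omega_{n,k_n}})(x_n)\big):=(\delta_{\omega_{1,1}}\cdots\delta_{\omega_{1,k_1}})(f(x_1))\cdots(\delta_{\omega_{n,1}}\cdots\delta_{\omega_{n,k_n}})(f(x_n)),$$
where the product on the right is taken in $A$ in the same order, and the empty word maps to $1$. This is exactly the unique algebra homomorphism from the free associative algebra on $\ncmder(X)$ that extends the set map $\ncmder(X)\to A$, $(\partial_{\omega_1}\cdots\partial_{\omega_k})(x)\mapsto \delta_{\omega_1}\cdots\delta_{\omega_k}(f(x))$. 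It is well defined because words in $M_{\NC}(\partial_\Omega)$ are not subject to any relations, so no commutation of the $\delta_\omega$ is required. Clearly $\bar f\circ i=f$.

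Then I would verify $\bar f\circ D_\tau=\delta_\tau\circ\bar f$ for $\tau\in\Omega$, first on letters and then on words.
\begin{itemize}
\item On a letter: $\bar f(D_\tau((\partial_{\omega_1}\cdots\partial_{\omega_k})(x)))=\delta_\tau\delta_{\omega_1}\cdots\delta_{\omega_k}(f(x))$, by definition.
\item On a word: the left side is the sum over $i$ of the words with the $i$-th letter differentiated. Applying $\bar f$ gives the expansion of $\delta_\tau$ of an ordered product in $A$ via the iterated Leibniz rule, which holds in any associative algebra.
\end{itemize}
Equivalently, both $\bar f\circ D_\tau$ and $\delta_\tau\circ\bar f$ are $\bar f$-derivations $\bfk\langle\ncmder(X)\rangle\to A$ agreeing on generators, so they agree everywhere.

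Finally, uniqueness: any homomorphism $g$ with $g\circ i=f$ and $g\circ D_\omega=\delta_\omega\circ g$ satisfies, by induction on $k$, $g((\partial_{\omega_1}\cdots\partial_{\omega_k})(x))=\delta_{\omega_1}\cdots\delta_{\omega_k}(f(x))$. Being multiplicative, $g$ then agrees with $\bar f$ on all words. No step is a real obstacle. The only point needing care is keeping the order of factors when checking the Leibniz compatibility, which the generator-wise argument handles cleanly.
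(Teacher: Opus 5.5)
Your proof is correct and takes essentially the same approach as the paper, which only says the argument is similar to that of Theorem~\ref{t:fncmdca}: define $\bar f$ on monomials, check $\bar f\circ i=f$ and $\bar f\circ D_\tau=\delta_\tau\circ\bar f$ through the Leibniz expansion, and obtain uniqueness from the construction. You carry this over to ordered words, and you add two details the paper leaves implicit: that the extended $D_\omega$ is a well-defined derivation, and the induction showing uniqueness.
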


\begin{proof}
The proof is similar to the one for Theorem \mref{t:fncmdca}.
\end{proof}

\section{Variations of \cmnas and their derivations from variations of \cmdas}
\mlabel{s:mnov}
In this section, we introduce variations of \cmnas. We then show that variations of multi-differential algebras give rise to variations of \cmnas as shown in Diagram~\meqref{e:inddiag}.

\begin{defi}
\mlabel{d:mna}
\begin{enumerate}
\item \mlabel{cmna}
\mcite{BrD} A \textbf{\cmna} is a vector space $N$ equipped with bilinear products $\rhd_\omega$ indexed by a set $\Omega$, which satisfy the following identities.
\begin{eqnarray}
&(x\rhd_\omega y)\rhd_\tau z-x\rhd_\omega (y\rhd_\tau z)=(y\rhd_\omega x)\rhd_\tau z-y\rhd_\omega (x\rhd_\tau z),&
\mlabel{eq:nv1}
\\
&(x\rhd_\omega y)\rhd_\tau z-x\rhd_\omega (y\rhd_\tau z)=(x\rhd_\tau y)\rhd_\omega z-x\rhd_\tau(y\rhd_\omega z),&
\mlabel{eq:nv2}\\
&(x\rhd_\omega y)\rhd_\tau z=(x\rhd_\tau z)\rhd_\omega y, \quad x, y, z\in N, \omega, \tau \in \Omega.&
\mlabel{eq:nv3}
\end{eqnarray} 
\item 
\mlabel{d:ncmna}
A {\bf \ncmna} is a vector space $N$ equipped with bilinear products $\rhd_\omega$ indexed by a set $\Omega$, which satisfy the following identities. 
\begin{eqnarray}
&(x \rhd_\omega y)\rhd_\tau z-x\rhd_\omega (y\rhd_\tau z)=(y\rhd_\omega x)\rhd_\tau z-y\rhd_\omega (x\rhd_\tau z),
\mlabel{eq:ncmn1}
&\\ 
&(x\rhd_\omega y)\rhd_\tau z=(x\rhd_\tau z)\rhd_\omega y,\quad  x,y,z\in N, \omega,\tau\in \Omega.
\mlabel{eq:ncmn2} &
\end{eqnarray}
\item 
\mlabel{d:mnna}
A {\bf \cmncna} is a vector space $N$ equipped with two families of bilinear products 
$$\rhd_\Omega:=\{\rhd_\omega\,|\,\omega\in \Omega\}, \quad 
\lhd_\Omega:=\{\lhd_\omega\,|\,\omega\in \Omega\},$$ 
which satisfy the following identities. 
\begin{eqnarray}
&(x\lhd_\omega y)\rhd_\tau z=x\lhd_\omega(y\rhd_\tau z),
&\mlabel{eq: mnca1}
\\
&(x\rhd_\omega y)\rhd_\tau z-x\lhd_\tau(y\lhd_\omega z)=x\rhd_\tau(y\lhd_\omega z)-(x\rhd_\omega y)\lhd_\tau z,
&\mlabel{eq: mnca2}
\\
&x\rhd_\omega(y\rhd_\tau z)-x\rhd_\tau(y\rhd_\omega z)=x\rhd_\tau(y\lhd_\omega z)-x\rhd_\omega(y\lhd_\tau z),
&
\mlabel{eq: mnca3}
\\
&(x\rhd_\tau y)\rhd_\omega z-x\lhd_\tau(y\lhd_\omega z)=x\rhd_\omega(y\lhd_\tau z)-(x\rhd_\omega y)\lhd_\tau z,
&\mlabel{eq: mnca4}
\\
&(x\lhd_\omega y)\lhd_\tau z-(x\lhd_\tau y)\lhd_\omega z=(x\rhd_\tau y)\lhd_\omega z-(x\rhd_\omega y)\lhd_\tau z,
&\mlabel{eq: mnca5}
\\
&x\lhd_\omega (y\lhd_\tau z)-x\lhd_\tau(y\lhd_\omega z)=(x\rhd_\tau y)\lhd_\omega z-(x\rhd_\omega y)\lhd_\tau z,\ \ x, y, z\in N, \omega, \tau\in \Omega.&
\mlabel{eq: mnca6}
\end{eqnarray}
\item 
\mlabel{d:ncmncna}
A {\bf \ncmncna} is a vector space $N$ equipped with two families of bilinear products 
$$\rhd_\Omega:=\{\rhd_\omega\,|\,\omega\in \Omega\}, \quad 
\lhd_\Omega:=\{\lhd_\omega\,|\,\omega\in \Omega\},$$ 
which satisfy the following identities. 
\begin{eqnarray}
&(x\lhd_\omega y)\rhd_\tau z=x\lhd_\omega(y\rhd_\tau z),&
\mlabel{eq: ncncna1}
\\
&(x\rhd_\omega y)\rhd_\tau z-x\lhd_\tau(y\lhd_\omega z)=x\rhd_\tau(y\lhd_\omega z)-(x\rhd_\omega y)\lhd_\tau z, \ x, y, z\in N, \omega, \tau\in \Omega.&
\mlabel{eq: ncncna2}
\end{eqnarray}
\end{enumerate}
\end{defi}

As in the case of Novikov algebras, the various Novikov algebras are induced by various multi-differential algebras as shown below. We include the case of multi-Novikov algebras for completeness. General studies of induces structures can be found in\,\mcite{GWZ,ZHG}

\begin{thm} \mlabel{p:mdaind}
Let $\Omega$ be a nonempty set. 
\begin{enumerate}
\item 
\mcite{BrD} Let $(A,\partial_\Omega)$ be a commuting multi-differential commutative algebra. Define binary operations $\rhd_\Omega:=\{\rhd_\omega, \omega\in\Omega\}$ on $A$ by 
$$ x \rhd_\omega y:=x \partial_\omega(y), \quad x, y\in A, \omega\in \Omega.$$
Then $(A,\rhd_\Omega)$ is a multi-Novikov algebra. 
\mlabel{p:mnov1}
\item
Let $(A, \partial_\Omega)$ be a \ncmdca. 
Define binary operations $\rhd_\Omega:=\{\rhd_\omega\,|\,\omega\in \Omega\}$ on $A$ by 
$$x\rhd_\omega y=x\partial_\omega (y), \quad x, y\in A, \omega\in \Omega.$$
Then $(A, \rhd_\Omega)$ is a \ncmna. 
\mlabel{p:mnovfromncmdca}
\item 
Let $(A, \partial_\Omega)$ be a \cmdnca. 
Define binary operations 
$$\rhd_\Omega:=\{\rhd_\omega\,|\,\omega\in\Omega\}, \quad  \lhd_\Omega:=\{\lhd_\omega\,|\, \omega\in \Omega\}$$ 
on $A$ by 
\begin{equation} \mlabel{eq:cmnca}
x\rhd_\omega y=x\partial_\omega (y), \quad x\lhd_\omega y=\partial_\omega(x) y\quad x, y\in A, \omega\in \Omega.
\end{equation}
Then $(A,\rhd_{\Omega},\lhd_\Omega)$ is a \cmncna.
\mlabel{p:mnovfromcmdnca}
\item
Let $(A, \partial_\Omega)$ be a \ncmdnca. 
Define binary operations 
$$\rhd_\Omega:=\{\rhd_\omega\,|\,\omega\in\Omega\}, \quad  \lhd_\Omega:=\{\lhd_\omega\,|\, \omega\in \Omega\}$$ on $A$ by 
\begin{equation} \mlabel{eq:ncmnca}
x\rhd_\omega y:=x\partial_\omega (y), \quad x\lhd_\omega y:=\partial_\omega(x) y\quad x, y\in A, \omega\in \Omega.
\end{equation}
Then $(A, \rhd_\Omega,\lhd_\Omega)$ is a \ncmncna.
\mlabel{p:mnovfromncmdnca}
\end{enumerate}
\end{thm}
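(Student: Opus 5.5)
Each part is verified by direct expansion: write every product through the derivations, apply the Leibniz rule, and compare both sides of each identity. Part (1) is cited from \mcite{BrD} and will follow as a specialization once (2) and (3) are in hand. Parts (2) and (3) are likewise specializations of the corresponding checks in (4) and (3), so the essential computations are those for (2) and (4), with (3) adding the identities that need commuting derivations.

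For (2), with $A$ commutative, expand
\[(x\rhd_\omega y)\rhd_\tau z-x\rhd_\omega(y\rhd_\tau z)=x\partial_\omega(y)\partial_\tau(z)-x\partial_\omega\big(y\partial_\tau(z)\big)=-xy\,\partial_\omega\partial_\tau(z).\]
Since $xy=yx$, this is symmetric in $x,y$, which gives Eq.~\meqref{eq:ncmn1}. Right-commutativity Eq.~\meqref{eq:ncmn2} reads $x\partial_\omega(y)\partial_\tau(z)=x\partial_\tau(z)\partial_\omega(y)$ and holds by commutativity alone. Neither step uses $[\partial_\omega,\partial_\tau]=0$. For (1), Eq.~\meqref{eq:nv2} additionally requires $xy\,\partial_\omega\partial_\tau(z)=xy\,\partial_\tau\partial_\omega(z)$, which is exactly where commuting derivations enter.

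For (4), in a noncommutative $A$, Eq.~\meqref{eq: ncncna1} is associativity: both sides equal $\partial_\omega(x)\,y\,\partial_\tau(z)$. For Eq.~\meqref{eq: ncncna2}, I move all terms to one side and expand each of the four terms by Leibniz. The result is a linear combination of words $x\,\partial_\tau\partial_\omega(y)\,z$, $x\,\partial_\omega(y)\,\partial_\tau(z)$, $x\,\partial_\tau(y)\,\partial_\omega(z)$, $x\,y\,\partial_\tau\partial_\omega(z)$ and similar words. I then check that the coefficients cancel, keeping the order of derivative applications in every term. The two sides were presumably designed so that only $\partial_\tau\partial_\omega$ (never $\partial_\omega\partial_\tau$) occurs on both sides, so no commutation relation is needed.

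For (3), the remaining identities Eqs.~\meqref{eq: mnca3}--\meqref{eq: mnca6} are expanded in the same way. Terms such as $x\,y\,\partial_\omega\partial_\tau(z)-x\,y\,\partial_\tau\partial_\omega(z)$ appear and are cancelled using $\partial_\omega\partial_\tau=\partial_\tau\partial_\omega$. The main obstacle is this bookkeeping, particularly for Eqs.~\meqref{eq: mnca5} and \meqref{eq: mnca6}, where second derivatives land on $x$ or $y$ in various positions. I would organize the terms by which variable carries which derivatives and check that each class cancels, using commutation of derivations only for the mixed second-order terms. If some identity fails as written, the coefficient comparison will expose the discrepancy, which would indicate a typo in the order of $\omega,\tau$.
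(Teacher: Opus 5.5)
Your plan matches the paper's proof: expand each product with the Leibniz rule and compare terms. Commutativity of $A$ gives \eqref{eq:ncmn1}--\eqref{eq:ncmn2}, and commuting derivations are needed only for \eqref{eq:nv2} and \eqref{eq: mnca3}--\eqref{eq: mnca6}; reading \eqref{eq: mnca1}--\eqref{eq: mnca2} off the checks in (4) is only a mild reorganisation of the paper's separate verifications.

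The expansions you defer all close as written, so no typo in $\omega,\tau$ needs fixing. In \eqref{eq: ncncna2} the only second-order term is $x\,\partial_\tau\partial_\omega(y)\,z$, occurring twice with opposite signs. The defects in \eqref{eq: mnca3}--\eqref{eq: mnca6} are combinations of $xy\,[\partial_\omega,\partial_\tau](z)$, $x\,[\partial_\omega,\partial_\tau](y)\,z$ and $[\partial_\omega,\partial_\tau](x)\,yz$, which vanish when the derivations commute.
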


\begin{proof}
\mref{p:mnov1} This is obtained in \mcite{BrD}. 

\smallskip \noindent
\mref{p:mnovfromncmdca}
We just need to show that the identities (\mref{eq:ncmn1}) and (\mref{eq:ncmn2}) in Definition \mref{d:mna}.\mref{d:ncmna} hold. Indeed, for any $x, y, z\in A$ and $\omega, \tau\in \Omega$, we have 
\begin{align*}
(x \rhd_\omega y)\rhd_\tau z-x\rhd_\omega (y\rhd_\tau z )
&=x\partial_\omega (y)\partial_\tau (z)-x\partial_\omega(y)\partial_\tau (z)-xy(\partial_\omega \partial_\tau z)\\
&=-xy(\partial_\omega\partial_\tau z)\\
&=y\partial_\omega (x)\partial_\tau (z)-y\partial_\omega(x)\partial_\tau (z)-yx(\partial_\omega \partial_\tau)(z)\\
&=(y\rhd_\omega x)\rhd_\tau z-y\rhd_\omega (x\rhd_\tau z),
\end{align*}
and
$$		(x\rhd_\omega y)\rhd_\tau z =x\partial_\omega (y)\partial_\tau (z)
=(x\rhd_\tau z)\rhd_\omega y.  
$$

\smallskip
\noindent
\mref{p:mnovfromcmdnca}
We verify identities (\mref{eq: mnca1}) -- (\mref{eq: mnca6}) in Definition \mref{d:mna}.\mref{d:mnna} as follows. 
$$(x\lhd_\omega y)\rhd_\tau z=\partial_\omega(x)y\partial_\tau(z)
=x\lhd_\omega(y\rhd_\tau z).$$
\begin{align*}
(x\rhd_\omega y)\rhd_\tau z-x\lhd_\tau(y\lhd_\omega z)&=
x\partial_\omega(y)\partial_\tau(z)-\partial_\tau(x)\partial_\omega(y)z\\&=x\partial_\tau\partial_\omega(y)z+x\partial_\omega(y)\partial_\tau(z)-\partial_\tau(x)\partial_\omega(y)z-x\partial_\tau\partial_\omega(y)z\\
&=x\rhd_\tau(y\lhd_\omega z)-(x\rhd_\omega y)\lhd_\tau z.
\end{align*}
\begin{align*}
x\rhd_\omega(y\rhd_\tau z)-x\rhd_\tau(y\rhd_\omega z)&=x\partial_\omega(y)\partial_\tau(z)+xy\partial_\omega\partial_\tau(z)-x\partial_\tau(y)\partial_\omega(z)-xy\partial_\tau\partial_\omega(z)\\
&=x\partial_\tau\partial_\omega(y)z+x\partial_\omega(y)\partial_\tau(z)-x\partial_\omega\partial_\tau(y)z-x\partial_\tau(y)\partial_\omega(z)\\
&=x\rhd_\tau(y\lhd_\omega z)-x\rhd_\omega(y\lhd_\tau z).
\end{align*}
\begin{align*}
(x\rhd_\tau y)\rhd_\omega z-x\lhd_\tau(y\lhd_\omega z)&=x\partial_\tau(y)\partial_\omega(z)-\partial_\tau(x)\partial_\omega(y)z\\
&=x\partial_\omega\partial_\tau(y)z+x\partial_\tau(y)\partial_\omega(z)-\partial_\tau(x)\partial_\omega(y)z-x\partial_\tau\partial_\omega(y)z\\
&=x\rhd_\omega(y\lhd_\tau z)-(x\rhd_\omega y)\lhd_\tau z.
\end{align*}
\begin{align*}
(x\lhd_\omega y)\lhd_\tau z-(x\lhd_\tau y)\lhd_\omega z&=\partial_\tau\partial_\omega(x)yz+\partial_\omega(x)\partial_\tau(y)z-\partial_\omega\partial_\tau(x)yz-\partial_\tau(x)\partial_\omega(y)z\\
&=x\partial_\omega\partial_\tau(y)z+\partial_\omega(x)\partial_\tau(y)z-x\partial_\tau\partial_\omega(y)z-\partial_\tau(x)\partial_\omega(y)z\\
&=(x\rhd_\tau y)\lhd_\omega z-(x\rhd_\omega y)\lhd_\tau z.
\end{align*}
\begin{align*}
x\lhd_\omega (y\lhd_\tau z)-x\lhd_\tau(y\lhd_\omega z)
&=\partial_\omega(x)\partial_\tau(y)z-\partial_\tau(x)\partial_\omega(y)z\\
&=x\partial_\omega\partial_\tau(y)z+\partial_\omega(x)\partial_\tau(y)z-x\partial_\tau\partial_\omega(y)z-\partial_\tau(x)\partial_\omega(y)z\\
&=(x\rhd_\tau y)\lhd_\omega z-(x\rhd_\omega y)\lhd_\tau z.
\end{align*}

\smallskip
\noindent
\mref{p:mnovfromncmdnca}
We verify identities (\mref{eq: ncncna1}) and (\mref{eq: ncncna2}) in Definition \mref{d:mna}.\mref{d:ncmncna} as follows. 
$$(x\lhd_\omega y)\rhd_\tau z=\partial_\omega(x)y\partial_\tau(z)
=x\lhd_\omega(y\rhd_\tau z).$$
\begin{align*}
(x\rhd_\omega y)\rhd_\tau z-x\lhd_\tau(y\lhd_\omega z)&=
x\partial_\omega(y)\partial_\tau(z)-\partial_\tau(x)\partial_\omega(y)z\\&=x\partial_\tau\partial_\omega(y)z+x\partial_\omega(y)\partial_\tau(z)-\partial_\tau(x)\partial_\omega(y)z-x\partial_\tau\partial_\omega(y)z\\&=x\rhd_\tau(y\lhd_\omega z)-(x\rhd_\omega y)\lhd_\tau z. 
\hspace{4cm}	\qedhere
\end{align*}
\end{proof}

\begin{ex}
Let $(\bfk[x],\cdot)$ be the polynomial algebra in variable $x$. Let $$\left\{\left .f(x)\frac{d}{dx}\,\right|\,f(x)\in \bfk[x]\right\}$$ 
be the family of derivations on $\bfk[x]$ as in Example~\mref{e:pa}.\mref{i:pab}. Define a family of binary operations  
$\rhd_{\bfk[x]}=\{\rhd_{f(x)}|f(x)\in \bfk[x]\}$ on $\bfk[x]$ by taking 
$$g(x)\rhd_{f(x)} h(x):=g(x)f(x)\frac{d}{dx}(h(x)), \quad g(x),h(x)\in \bfk[x].$$ 
Then $(\bfk[x],\rhd_{\bfk[x]})$ is a \ncmna.
\end{ex}

\begin{ex}
Let $(A,\cdot,[,]_A)$ be a Poisson algebra. Then we have a \ncmdca $(A,\ad(A))$ as in Proposition \mref{p:poisson}. Define a family of binary operations $\rhd_{\ad(A)}=\{\rhd_\partial|\partial=\ad_z\in \ad(A), z\in A\}$ on $A$, by 
$$x\rhd_\partial y:=x\partial (y)=x\ad_z (y)=x[z,y].$$ 
Then $(A,\rhd_{\ad(A)})$ is a \ncmna. This realizes every Poisson algebra as a \ncmna. 
\end{ex}

\begin{ex}
Let $M_{2\times 2}$ be the linear space of $2\times 2$ matrices with the matrix multiplication. Take a family of pairwisely commutative matrices $\Omega=\{x_n|x_n\in M_{2\times 2}\}$. Define a family of  derivations $\ad_\Omega:=\{\ad_{x_i}|x_i \in \Omega\}$ on $M_{2\times 2}$ by $$\ad_{x_i}(y)=[x_i,y]=x_iy-yx_i, x_i\in \Omega, y\in M_{2\times 2}.$$ 
Then we get a family of pairwise commuting derivations, since for any $2\times 2$ matrix $z$, we have 
\begin{eqnarray*}
\ad_{x_1}\ad_{x_2}(z)&=&x_1(x_2z)-x_1(zx_2)-(x_2z)x_1+(zx_2)x_1\\
&=&x_2(x_1z)-x_2(zx_1)-(x_1z)x_2+(zx_1)x_2\\
&=&\ad_{x_2}\ad_{x_1}(z).
\end{eqnarray*} 
Define a family of binary operations $\rhd_\Omega:=\{\rhd_{x_i}|x_i\in \Omega\}$ on $M_{2\times 2}$ by 
$$y\rhd_{x_i} z:=y\,\ad_{x_i}(z).$$ 
Then $(M_{2\times 2},\rhd_\Omega)$ is a \cmncna.
\end{ex}

\section{Constructions of free noncommuting multi-Novikov algebras}
\mlabel{s:fnmna}
\nc\novpromag{\blacktriangleright}

The purpose of this section is to construct the free objects in the category of noncommuting multi-Novikov algebras defined in Definition~\mref{d:mna}.\mref{d:ncmna}. In fact, we will give two explicit constructions of these free objects.  
We first introduce enough notations to state the main result: Theorem~\mref{t:fncmncasubnov}. The proof will comprise the rest of the paper, adapting the notations and construction of free Novikov algebras and multi-Novikov algebras\,\mcite{BrD,DL}.  

\subsection{Free noncommuting multi-Novikov algebras from typed decorated rooted trees}

Recall that an {\bf ($\Omega$-)multi-magmatic algebra} is a vector space $\text{M}$ equipped with multiplications $\rhd_\omega$ indexed by $\omega$ in a set $\Omega$. 
A standard construction of the free $(\Omega)$-multi-magmatic algebra on $X$ is the vector space 
$$\text{Mag}_\Omega(X):=\bfk \mathcal{T}_\Omega(X)$$ 
with the basis $\mathcal{T}_\Omega(X)$ of planar binary trees whose internal vertices are decorated by $\Omega$ and whose the leafs are decorated by $X$. The binary operation $\novpromag_\omega$ 
on $\text{Mag}_\Omega(X)$ is the grafting of two trees with the new root decorated by $\omega$\,\mcite{BrD}. 

Let $I$ be the two-sided ideal of $\fmma$ generated by all elements of the forms
\begin{align}
(x \novpromag_\omega y)\novpromag_\tau z&-x\novpromag_\omega (y\novpromag_\tau z)-(y\novpromag_\omega x)\novpromag_\tau z+y\novpromag_\omega (x\novpromag_\tau z), \mlabel{e:ncmnrel1}\\
(x\novpromag_\omega y)\novpromag_\tau z&-(x\novpromag_\tau z)\novpromag_\omega y,\quad x,y,z\in \fmma,\omega,\tau\in \Omega.
\mlabel{e:ncmnrel2}
\end{align}
Then by construction, 
$\fmma / I$
is the free noncommuting multi-Novikov algebra on $X$. 
Let $\pi:\fmma\to \fmma / \text{I}$ denote the natural quotient map, and let 
$$i: X\hookrightarrow \fmma\xrightarrow{\pi} \fmma / I.$$ 

Let $u$ be an element in the basis $\mathcal{T}_\Omega(X)$. Then either $u=z\in X$ or $u=y_1\novpromag_{\omega_1} z_1$ is the grafting of the left branch $y_1$ and right branch $z_1$ with root $\omega_1$. In turn, unless $z_1$ is in $X$, it also has a grafting $z_1=y_2 \novpromag_{\omega_2} z_2$, yielding $u=y_1\novpromag_{\omega_1}(y_2\novpromag_{\omega_2} z_2)$. Iterating this process yields the unique expression 
\begin{equation}
u=y_1\novpromag_{\omega_1}(y_2\novpromag_{{\omega_2}}(\cdots (y_n\novpromag_{\omega_n}z)\cdots)),
\mlabel{eq:rgraft}
\end{equation}
where $z\in X$, $y_i\in \mathcal{T}$ and $\omega_i\in \Omega,i=1,\cdots,n$. 
An element of $X$ can also be regarded as a $u$ in Eq.~\eqref{eq:rgraft} with $n=0$.

For our purpose of explicitly constructing free noncommuting multi-Novikov algebras, we give another construction of the free objects by quotients. 

Let ${\rm MT}_\Omega(X)$ denote the set of all typed decorated rooted trees, defined to be the rooted trees whose vertices are decorated by elements of $X$  and edges are decorated by elements of $\Omega$ \cite{Foi0}. 
See\,\mcite{GGL} for such trees, called vertex-edge decorated trees, in the algebraic study of Volterra integral equations. 
As an algebraic interpretation of the typed decorated rooted trees, we give the following notion\,\mcite{DL}. 

\begin{defi}
An {\bf $\bf{r}$-expression} is recursively given by $r(z)$ for $z\in X$ and by 
\begin{equation}
r(y_1,\omega_1,\cdots,y_n,\omega_n;z),
\mlabel{e:rexp}
\end{equation}
where $z\in X$, $\omega_i\in \Omega$, and each $y_i\in {\rm MT}_\Omega(X)$ itself is an ${r}$-expression, $i=1,\cdots, n$. 
\end{defi}


In the following, the set of typed decorated trees are encoded as the set of $r$-expressions by 
encoding 
$\bullet_z$ as $z$ for $z\in X$, and encoding
$$
\begin{tikzpicture}[scale=0.6]
\coordinate (z) at (0,0);
\coordinate (y1) at (-2,2);
\coordinate (y2) at (-1,2);
\coordinate (yn) at (2,2);
\draw (z) -- (y1) node[midway, left] {$\omega_1$};
\draw (z) -- (y2) node[midway,right] {$\omega_2$};
\draw (z) -- (yn) node[midway,right] {$\omega_n$};
\fill (-2.8,1.3)  node[below] {};
\fill (y1) circle (2pt) node[above] {$y_1$};
\fill (y2) circle (2pt) node[above] {$y_2$};
\fill (z) circle (2pt) node[below] {$z$};
\fill (yn) circle (2pt) node[above] {$y_n$};
\node at (0.5,1.8) {$\cdots$};
\end{tikzpicture}
$$
as the $r$-expression $r(y_1,\omega_1,\cdots,y_n,\omega_n;z)$. 

\nc{\tdeg}{\mathrm{tdeg}}

The {\bf total degree} $\tdeg (T)$ of a typed decorated tree $T$ is the total number of times (counting multiplicity) that elements of $X\sqcup \Omega$ appearing in the tree. In terms of an $r$-expression $r$, the {\bf total degree} $\tdeg(r)$ it is the total number of times that elements of $X\sqcup \Omega$ appearing in the expression. 

We  recursively define a linear map 
\begin{align}\mlabel{eq:tm1}
\begin{split}
h: \bfk{\rm MT}_\Omega(X)\ra& \fmma, \\
r(z)\mapsto&~ {z}, \\
r(y_1,\omega_1,\cdots,y_n,\omega_n;z)\mapsto&~ h(y_1) \novpromag_{\omega_1} h\big(r(y_2,\omega_2,\cdots,y_n,\omega_n;z)),
\end{split}
\end{align}
for  $z\in X,\omega_i\in \Omega$ with $i=1,\cdots,n$.

\begin{lm}
The linear map $h: \bfk{\rm MT}_\Omega(X)\ra \fmma$ in Eq.\,\meqref{eq:tm1} is bijective.
\mlabel{l:otocbmam}
\end{lm}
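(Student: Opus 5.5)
Since $h$ is linear and both $\bfk{\rm MT}_\Omega(X)$ and $\fmma=\bfk\mathcal{T}_\Omega(X)$ are free on the bases ${\rm MT}_\Omega(X)$ and $\mathcal{T}_\Omega(X)$, I will show that $h$ restricts to a bijection ${\rm MT}_\Omega(X)\to \mathcal{T}_\Omega(X)$ of bases. First I check by induction on total degree that $h$ sends each $r$-expression to a single basis tree, not a linear combination. For $r(z)$ this is clear. For $r(y_1,\omega_1,\dots,y_n,\omega_n;z)$, the images $h(y_1)$ and $h(r(y_2,\omega_2,\dots,y_n,\omega_n;z))$ are basis trees by induction, since both arguments have smaller total degree. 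Their grafting under $\novpromag_{\omega_1}$ is again a basis tree.

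Unwinding the recursion gives the closed form
$$h\big(r(y_1,\omega_1,\dots,y_n,\omega_n;z)\big)=h(y_1)\novpromag_{\omega_1}\big(h(y_2)\novpromag_{\omega_2}(\cdots(h(y_n)\novpromag_{\omega_n}z)\cdots)\big),$$
which has exactly the shape of Eq.~\eqref{eq:rgraft}. I will also note that the number of leaves of $h(T)$ equals the number of $X$-labels of $T$, and that $\tdeg$ is preserved, counting leaves plus internal vertices on the tree side. This allows simultaneous induction on degree.

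For \emph{surjectivity}, take $u\in\mathcal{T}_\Omega(X)$ and use the unique right-comb decomposition \eqref{eq:rgraft}: $u=y_1\novpromag_{\omega_1}(\cdots(y_n\novpromag_{\omega_n}z)\cdots)$. Each $y_i$ has strictly fewer vertices than $u$, so by induction $y_i=h(T_i)$ for some $T_i\in{\rm MT}_\Omega(X)$. Then $u=h\big(r(T_1,\omega_1,\dots,T_n,\omega_n;z)\big)$ by the closed form.

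For \emph{injectivity}, suppose $h(T)=h(T')$. Applying the closed form to both sides gives two expressions of the form \eqref{eq:rgraft} for the same tree. By uniqueness of that decomposition (read off the root and iterate down the right spine), the length $n$, the labels $\omega_i$, the final leaf $z$, and the trees $h(T_i)=h(T'_i)$ all coincide. Induction then gives $T_i=T'_i$, hence $T=T'$.

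The main obstacle is an ordering subtlety. A typed decorated rooted tree has unordered children, whereas an $r$-expression lists the pairs $(y_i,\omega_i)$ in a fixed order. For $h$ to be well defined and injective, ${\rm MT}_\Omega(X)$ must be read as the set of $r$-expressions with ordered argument lists, i.e.\ planar trees. With an unordered reading, $h$ could not be injective, since reordering children gives distinct planar binary trees. So I will state explicitly that ${\rm MT}_\Omega(X)$ is identified with the set of $r$-expressions, ordered lists included, as the encoding in the text indicates. Under that reading the bijection is just the uniqueness of \eqref{eq:rgraft} applied recursively.
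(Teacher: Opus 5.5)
Your proposal is correct and follows essentially the paper's route: the paper defines the inverse $t$ recursively from the unique right-comb decomposition \eqref{eq:rgraft} and checks $ht=\id$ and $th=\id$ by induction on total degree, which has the same content as your basis-level surjectivity and injectivity arguments. Your planarity caveat is apt, and the paper tacitly assumes ordered $r$-expressions, since it later permutes branches only modulo $J$; note, however, that with unordered children the real failure is that $h$ is not well defined (equivalently, $t$ is not injective), not that $h$ fails to be injective.
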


\nc{\calt}{\mathcal{T}}

\begin{proof}
To give the proof, we recursively construct another linear map
\begin{align}\mlabel{eq:tm2}
\begin{split}
t: \fmma\ra& \bfk{\rm MT}_\Omega(X),\\
z\mapsto&~ {r(z)},\quad\text{ for } z\in X, \\
u=y_1\novpromag_{\omega_1}\Big(y_2{\novpromag_{\omega_2}}(\cdots (y_n\novpromag_{\omega_n}z)\cdots)\Big) \mapsto &~ r\big(t(y_1) ,{\omega_1},t(y_2),\omega_2, \cdots,t(y_n),\omega_n;z\big),
\end{split}
\end{align}
for a planar binary tree $u\in \mathcal{T}_\Omega(X)$ with the unique factorization in Eq.\meqref{eq:rgraft}. 

We next verify $ht(u)=\id_{\fmma}(u)$ by induction on the total degree $\tdeg(u)$ of $u\in \calt_\Omega(X)$. First 
$ht(z)=h(r(z))=z,$ for $z\in X$. 
Inductively, for $y_1\novpromag_{\omega_1}u_1\in \calt$ with $u_1=y_2{\novpromag_{\omega_2}}(\cdots (y_n\novpromag_{\omega_n}z)\cdots)\in \calt$, we have 
$$ ht(y_1\novpromag_{\omega_1}u_1) = h\big(r\big(t(y_1) ,{\omega_1},t(y_2), \cdots,t(y_n),\omega_n;z\big)\big)
= ht(y_1)\novpromag_{\omega_1} ht(u_1)=y_1\novpromag_{\omega_1} u_1.$$
Similarly, $th=\id_{\bfk{\rm MT}_\Omega(X)}$.
\end{proof}

Through the linear bijection $h$, the free $\Omega$-multi-magmatic algebra structure on $\fmma$ defines a  free $\Omega$-multi-magmatic algebra structure on $\bfk{\rm MT}_\Omega(X)$ by transporting of structures, so that there is an $\Omega$-multi-magmatic algebra isomorphism
$$ t: (\fmma, \{\novpromag_\omega\}_{\omega\in \Omega}) \to (\bfk{\rm MT}_\Omega(X), \{\rhd_\omega\}_{\omega\in\Omega}).$$
Here for $u, v\in {\rm MT}_\Omega(X)$ and $\omega\in \Omega$, define
\vspb
$$ u \rhd_\omega v: = t(h(u)\novpromag_\omega h(v)).$$
So if $v=r(y_1,\omega_1,\cdots,y_n,\omega_n;z)$, then by Eqs.\,\meqref{eq:tm1}--\meqref{eq:tm2}, we have 
\begin{equation}
\mlabel{e:rexpprod}
\begin{split}
u {\rhd}_\omega v: &= t(h(u)\novpromag_\omega h(v))\\
&=t\bigg(h(u)\novpromag_\omega \Big(h(y_1)\novpromag_{\omega_1} \big(h(y_2)\novpromag_{\omega_2} (\cdots (h(y_n)\novpromag_{\omega_n} z))\big)\Big)\bigg)\\
&= r(u,\omega,y_1,\omega_1,\cdots,y_n,\omega_n;z).
\end{split}
\end{equation}

In other words, $u$ is grafted on the root of $v$ (with edge decoration $\omega$) from the left\,~\mcite{Foi}.
For example, 
\[
\begin{tikzpicture}[scale=0.6]
\coordinate (z) at (0,0);
\coordinate (y1) at (-2,2);
\coordinate (y2) at (-1,2);
\coordinate (yn) at (2,2);
\draw (z) -- (y1) node[midway, left] {$\omega_1$};
\draw (z) -- (y2) node[midway,right] {$\omega_2$};
\draw (z) -- (yn) node[midway,right] {$\omega_m$};
\fill (-2.8,1.3)  node[below] {};
\fill (y1) circle (2pt) node[above] {$x_1$};
\fill (y2) circle (2pt) node[above] {$x_2$};
\fill (z) circle (2pt) node[below] {$z$};
\fill (yn) circle (2pt) node[above] {$x_m$};
\node at (0.5,1.8) {$\cdots$};
\end{tikzpicture}
\begin{tikzpicture}[scale=0.6]
\draw  (-3,1)  node {${\rhd}_\omega$};
\coordinate (z) at (0,0);
\coordinate (y1) at (-2,2);
\coordinate (y2) at (-1,2);
\coordinate (yn) at (2,2);
\draw (z) -- (y1) node[midway, left] {$\tau_1$};
\draw (z) -- (y2) node[midway,right] {$\tau_2$};
\draw (z) -- (yn) node[midway,right] {$\tau_n$};
\fill (-2.8,1.3)  node[below] {};
\fill (y1) circle (2pt) node[above] {$y_1$};
\fill (y2) circle (2pt) node[above] {$y_2$};
\fill (z) circle (2pt) node[below] {$w$};
\fill (yn) circle (2pt) node[above] {$y_n$};
\node at (0.5,1.8) {$\cdots$};
\end{tikzpicture}
\begin{tikzpicture}[scale=0.6]
\draw  (-3,1)  node {$=$};
\coordinate (z0) at (2,0);
\coordinate (x1) at (0,2);
\coordinate (x2) at (1,2);
\coordinate (xm) at (4,2);
\coordinate (y1) at (-2,4);
\coordinate (y2) at (-1,4);
\coordinate (yn) at (2,4);

\draw (z0) -- (x1) node[midway,left] {$\omega$};
\draw (z0) -- (x2) node[midway,right] {$\tau_1$};
\draw  (2.5,1.5)  node {$\cdots$};
\draw (z0) -- (xm) node[midway,right] {$\tau_n$};
\draw (x1) -- (y1) node[midway,left] {$\omega_1 $};
\draw (x1) -- (y2) node[midway,right] {$\omega_2 $};
\draw  (0.5,3.5)  node {$\cdots$};
\draw (x1) -- (yn) node[midway,right] {$\omega_m $};
\fill (z0) circle (2pt) node[below]{$w$};
\fill (x1) circle (2pt) node[left]{$z$};
\fill (x2) circle (2pt) node[above,right]{$y_1$};
\fill (xm) circle (2pt) node[above]{$y_n$};
\fill (y1) circle (2pt) node[above]{$x_1$};
\fill (y2) circle (2pt) node[above]{$x_2$};
\fill (yn) circle (2pt) node[above]{$x_m$};
\end{tikzpicture}.
\]

Furthermore, the image $J:=t(I)$ of the multi-Novikov two-sided ideal $I$ is a  multi-Novikov two-sided ideal of $\bfk{\rm MT}_\Omega(X)$, and  
there is the following isomorphism of multi-Novikov algebras
\begin{equation}
\widetilde{h}:  \bfk{\rm MT}_\Omega(X)/J\overset{\cong}{\to} \fmma/I,\quad u+J\mapsto h(u)+I.
\mlabel{eq:isofnov}
\end{equation}
Hence
\vspc
\begin{equation}
\fnmna:=\bfk{\rm MT}_\Omega(X)/J \cong \fmma / I
\mlabel{e:qnov}
\end{equation}
is also the free $\Omega$-noncommuting multi-Novikov algebra on $X$.

\begin{remark}
\mlabel{remark:hmn}
For later applications, we interpret the two generators Eq.~\meqref{e:ncmnrel1}--\meqref{e:ncmnrel2} of the ideal $I$ for the quotient algebra $\fmma /I$ in terms of generators of the ideal $J$ for the quotient $\fnmna=\bfk{\rm MT}_\Omega(X)/J $.
\begin{enumerate}
\item 
\mlabel{item:hmn2}
We begin with the simpler case, for the generator Eq.~\eqref{e:ncmnrel2} of $I$, its corresponding generator in $J$ via the isomorphism $t$ in Eq.\,\mref{eq:tm2} is  
$$
\begin{tikzpicture}[scale=0.4]
\coordinate (z0) at (2,0);
\coordinate (y1) at (0,2);
\coordinate (y2) at (-2,4);

\draw (z0) -- (y1) node[midway,above] {$\tau$};
\draw (y1) -- (y2) node[midway,above] {$\omega $};

\draw  (z0) +(0.3,0.4) circle (15pt) node {$z$};
\draw (y1)+(0.3,0.4) circle (15pt) node {$y$};
\draw (y2)+(0,0.5) circle (15pt) node {$x$};
\end{tikzpicture}
\begin{tikzpicture}[scale=0.4]
\draw  (-3,2)  node {$-$};
\coordinate (z0) at (2,0);
\coordinate (y1) at (0,2);
\coordinate (y2) at (-2,4);

\draw (z0) -- (y1) node[midway,above] {$\omega$};
\draw (y1) -- (y2) node[midway,above] {$\tau $};

\draw  (z0) +(0.3,0.4) circle (15pt) node {$y$};
\draw (y1)+(0.3,0.4) circle (15pt) node {$z$};
\draw (y2)+(0,0.5) circle (15pt) node {$x$};
\end{tikzpicture}.$$
When $x, y, z$ are in $X$, (modulo) this relation means that the two segments of the tree with roots $y$ and $z$ can be exchanges, while the leaf $x$ is fixed. 
More generally, when we take $x=x_1$, $y=x_2\rhd_{\tau_2}(x_3\rhd_{\tau_3}( \cdots(x_{m}\rhd_{\tau_{m}}z'))\cdots)$, 
$z=y_2\rhd_{\omega_2}(y_3\rhd_{\omega_3}( \cdots(y_{n}\rhd_{\omega_{n}}z))\cdots)$, $\omega=\tau_1$ and $\tau=\omega_1$. 
Then for the $r$-expression 
$
r(y_1,\omega_1,\cdots ,y_n,\omega_n;z)$
with $y_1=r(x_1,\tau_1,\cdots,x_m,\tau_m;z')$, we have
\begin{equation}
\begin{split}
&r\big(r(x_1,\tau_1,\cdots,x_m,\tau_m;z'),\omega_1,y_2,\cdots ,y_n,\omega_n;z\big)\\
\equiv&~	r\big(r(x_1,\omega_1,y_2,\cdots ,y_n,\omega_n;z),\tau_1,\cdots,x_m,\tau_m;z'\big)	\mod  J
\end{split}
\mlabel{e:n22}
\end{equation}
In terms of typed trees, we have 
$$
\begin{tikzpicture}[scale=0.6]
\coordinate (z0) at (2,0);
\coordinate (x1) at (0,2);
\coordinate (x2) at (1,2);
\coordinate (xm) at (4,2);
\coordinate (y1) at (-2,4);
\coordinate (y2) at (-1,4);
\coordinate (yn) at (2,4);

\draw (z0) -- (x1) node[midway,left] {$\omega_1$};
\draw (z0) -- (x2) node[midway,right] {$\omega_2$};
\draw  (2.5,1.5)  node {$\cdots$};
\draw (z0) -- (xm) node[midway,right] {$\omega_n$};
\draw (x1) -- (y1) node[midway,left] {$\tau_1 $};
\draw (x1) -- (y2) node[midway,right] {$\tau_2 $};
\draw  (0.5,3.5)  node {$\cdots$};
\draw (x1) -- (yn) node[midway,right] {$\tau_m $};
\fill (z0) circle (2pt) node[below]{$z$};
\fill (x1) circle (2pt) node[left]{$z'$};
\fill (x2) circle (2pt) node[above,right]{$y_2$};
\fill (xm) circle (2pt) node[above]{$y_n$};
\fill (y1) circle (2pt) node[above]{$x_1$};
\fill (y2) circle (2pt) node[above]{$x_2$};
\fill (yn) circle (2pt) node[above]{$x_m$};
\end{tikzpicture}
\begin{tikzpicture}[scale=0.6]
\draw  (-3,2)  node {$\equiv$};
\coordinate (z0) at (2,0);
\coordinate (x1) at (0,2);
\coordinate (x2) at (1,2);
\coordinate (xm) at (4,2);
\coordinate (y1) at (-2,4);
\coordinate (y2) at (-1,4);
\coordinate (yn) at (2,4);

\draw (z0) -- (x1) node[midway,left] {$\tau_1$};
\draw (z0) -- (x2) node[midway,right] {$\tau_2$};
\draw  (2.5,1.5)  node {$\cdots$};
\draw (z0) -- (xm) node[midway,right] {$\tau_m$};
\draw (x1) -- (y1) node[midway,left] {$\omega_1 $};
\draw (x1) -- (y2) node[midway,right] {$\omega_2 $};
\draw  (0.5,3.5)  node {$\cdots$};
\draw (x1) -- (yn) node[midway,right] {$\omega_n $};
\fill (z0) circle (2pt) node[below]{$z'$};
\fill (x1) circle (2pt) node[left]{$z$};
\fill (x2) circle (2pt) node[above,right]{$x_2$};
\fill (xm) circle (2pt) node[above]{$x_m$};
\fill (y1) circle (2pt) node[above]{$x_1$};
\fill (y2) circle (2pt) node[above]{$y_2$};
\fill (yn) circle (2pt) node[above]{$y_n$};
\draw  (6,2)  node {$\pmod J$};	
\end{tikzpicture}$$
indicating that the two adjacent segments with roots $z$ and $z'$ can be exchanged, while the left-most leaf is fixed. 

\item Under the isomorphism $t$, the generator Eq.~\eqref{e:ncmnrel1} of  $I$ corresponds to the following generator of $J$  
$$
\begin{tikzpicture}[scale=0.4]
\coordinate (z0) at (2,0);
\coordinate (y1) at (0,2);
\coordinate (y2) at (-2,4);

\draw (z0) -- (y1) node[midway,above] {$\tau$};
\draw (y1) -- (y2) node[midway,above] {$\omega $};

\draw  (z0) +(0.3,0.4) circle (15pt) node {$z$};
\draw (y1)+(0.3,0.4) circle (15pt) node {$y$};
\draw (y2)+(0,0.5) circle (15pt) node {$x$};
\end{tikzpicture}
\begin{tikzpicture}[scale=0.4]
\draw  (-6,2)  node {$-$};
\coordinate (z0) at (0,0);
\coordinate (y1) at (-2,2);
\coordinate (y2) at (-4,2);

\draw (z0) -- (y1) node[midway,above] {$\tau$};
\draw (z0) -- (y2) node[midway,below] {$\omega $};

\draw  (z0) +(0.3,0.4) circle (15pt) node {$z$};
\draw (y1)+(0.3,0.4) circle (15pt) node {$y$};
\draw (y2)+(0,0.5) circle (15pt) node {$x$};
\end{tikzpicture}
\begin{tikzpicture}[scale=0.4]
\draw  (-3,2)  node {$-$};
\coordinate (z0) at (2,0);
\coordinate (y1) at (0,2);
\coordinate (y2) at (-2,4);

\draw (z0) -- (y1) node[midway,above] {$\tau$};
\draw (y1) -- (y2) node[midway,above] {$\omega $};

\draw  (z0) +(0.3,0.4) circle (15pt) node {$z$};
\draw (y1)+(0.3,0.4) circle (15pt) node {$x$};
\draw (y2)+(0,0.5) circle (15pt) node {$y$};
\end{tikzpicture}
\begin{tikzpicture}[scale=0.4]
\draw  (-6,2)  node {$+$};
\coordinate (z0) at (0,0);
\coordinate (y1) at (-2,2);
\coordinate (y2) at (-4,2);

\draw (z0) -- (y1) node[midway,above] {$\tau$};
\draw (z0) -- (y2) node[midway,below] {$\omega $};

\draw  (z0) +(0.3,0.4) circle (15pt) node {$z$};
\draw (y1)+(0.3,0.4) circle (15pt) node {$x$};
\draw (y2)+(0,0.5) circle (15pt) node {$y$};
\end{tikzpicture}
$$
Regard it as a reduction rule:
$$
\begin{tikzpicture}[scale=0.4]
\coordinate (z0) at (0,0);
\coordinate (y1) at (-2,2);
\coordinate (y2) at (-4,2);

\draw (z0) -- (y1) node[midway,above] {$\tau$};
\draw (z0) -- (y2) node[midway,below] {$\omega $};

\draw  (z0) +(0.3,0.4) circle (15pt) node {$z$};
\draw (y1)+(0.3,0.4) circle (15pt) node {$y$};
\draw (y2)+(0,0.5) circle (15pt) node {$x$};
\end{tikzpicture}
\begin{tikzpicture}[scale=0.4]
\draw  (-3,2)  node {$\equiv$};
\coordinate (z0) at (2,0);
\coordinate (y1) at (0,2);
\coordinate (y2) at (-2,4);

\draw (z0) -- (y1) node[midway,above] {$\tau$};
\draw (y1) -- (y2) node[midway,above] {$\omega $};

\draw  (z0) +(0.3,0.4) circle (15pt) node {$z$};
\draw (y1)+(0.3,0.4) circle (15pt) node {$y$};
\draw (y2)+(0,0.5) circle (15pt) node {$x$};
\end{tikzpicture}
\begin{tikzpicture}[scale=0.4]
\draw  (-3,2)  node {$-$};
\coordinate (z0) at (2,0);
\coordinate (y1) at (0,2);
\coordinate (y2) at (-2,4);

\draw (z0) -- (y1) node[midway,above] {$\tau$};
\draw (y1) -- (y2) node[midway,above] {$\omega $};

\draw  (z0) +(0.3,0.4) circle (15pt) node {$z$};
\draw (y1)+(0.3,0.4) circle (15pt) node {$x$};
\draw (y2)+(0,0.5) circle (15pt) node {$y$};
\end{tikzpicture}
\begin{tikzpicture}[scale=0.4]
\draw  (-6,2)  node {$+$};
\coordinate (z0) at (0,0);
\coordinate (y1) at (-2,2);
\coordinate (y2) at (-4,2);

\draw (z0) -- (y1) node[midway,above] {$\tau$};
\draw (z0) -- (y2) node[midway,below] {$\omega $};

\draw  (z0) +(0.3,0.4) circle (15pt) node {$z$};
\draw (y1)+(0.3,0.4) circle (15pt) node {$x$};
\draw (y2)+(0,0.5) circle (15pt) node {$y$};
\draw  (3,2)  node {$\pmod J$};	
\end{tikzpicture}
$$
The intuitive meaning is that the two branches of the left tree can be exchanges at the cost of introducing two extra trees with fewer branches. 

More generally, for a fixed $i\geq 1$, apply the congruence to $x=y_i$, $y=y_{i+1}$, $z=y_{i+2}\rhd_{\omega_{i+2}}(y_{i+3}\rhd_{\omega_{i+3}}( \cdots(y_{n}\rhd_{\omega_{n}}z))\cdots)$, $\omega=\omega_i$ and $\tau=\omega_{i+1}$ with $1\leq  i\leq n-1$.
Then multiply both sides of the resulting congruence on the left by the element  
$y_{i-1}$ via the multiplication $\rhd_{\omega_{i-1}}$. Then multiply both sides of the new resulting congruence on the left by  
$y_{i-2}$ via  $\rhd_{\omega_{i-2}}$. Then continue this process repeatedly until multiplying both sides on the left by  
$y_{1}$ via $\rhd_{\omega_{1}}$.
We finally obtain the congruence
\begin{equation}
\begin{split}
&~r(y_1,\omega_1,\cdots ,y_{i},\omega_{i},y_{i+1},\omega_{i+1},\cdots,y_n,\omega_n;z)\\
\equiv&~r\big(y_1,\omega_1,\cdots y_{i-1},\omega_{i-1}, y_{i}\rhd_{\omega_{i}}y_{i+1},\omega_{i+1},\cdots,y_n,\omega_n;z\big)\\
&-r\big(y_1,\omega_1,\cdots y_{i-1},\omega_{i-1}, y_{i+1}\rhd_{\omega_{i}}y_{i},\omega_{i+1},\cdots,y_n,\omega_n;z\big)\\
&+r(y_1,\omega_1,\cdots ,y_{i-1},\omega_{i-1},y_{i+1},\omega_{i},y_{i},\omega_{i+1},\cdots,y_n,\omega_n;z) ~~\mod{J}.
\end{split}
\mlabel{e:n21}
\end{equation}
In terms of typed decorated trees, we have 
$$
\begin{tikzpicture}[scale=0.6]
\coordinate (z0) at (0,0);
\coordinate (w1) at (-4,2);
\coordinate (w2) at (-2,2);
\coordinate (y1) at (-0.5,2);
\coordinate (y2) at (0.5,2);
\coordinate (z1) at (2,2);
\coordinate (z2) at (4,2);
\draw (z0) -- (w1) node[midway,left] {$\omega_1$};
\draw  (-2.2,1.5)  node {$\cdots$};
\draw  (z0) -- (w2) node[midway] {$\omega_{i-1}$};
\draw[dotted] (z0) -- (y1) node[midway,above] {$\omega_i\quad$};
\draw[dashed] (z0) -- (y2) node[midway,above] {$\quad \omega_{i+1}$};
\draw (z0) -- (z1) node[midway] {$\omega_{i+2}$};
\draw  (2.2,1.5)  node {$\cdots$};
\draw (z0) -- (z2) node[midway,right] {$\omega_{n}$};

\fill (z0) circle (2pt) node[below] {$z$};
\fill (w1) circle (2pt) node[above] {$y_1$};
\fill(w2)circle (2pt) node[above] {$y_{i-1}$};
\draw (y1)+(-0.2,0.5) circle (15pt) node {$y_i$};
\draw (y2)+(0.2,0.5) circle (15pt) node {$y_{i+1}$};
\fill (z1) circle (2pt) node[above]{$y_{i+2}$};
\fill (z2) circle (2pt) node[above]{$y_n$};
\end{tikzpicture}
\begin{tikzpicture}[scale=0.6]
\draw  (-5,1)  node {$\equiv$};
\coordinate (z0) at (0,0);
\coordinate (w1) at (-4,2);
\coordinate (w2) at (-2,2);
\coordinate (y1) at (0,2);
\coordinate (y2) at (-2,4);
\coordinate (z1) at (2,2);
\coordinate (z2) at (4,2);

\draw (z0) -- (w1) node[midway,left] {$\omega_1$};
\draw  (-2.2,1.5)  node {$\cdots$};
\draw  (z0) -- (w2) node[midway] {$\omega_{i-1}$};
\draw[dashed] (z0) -- (y1) node[midway,above] {$\omega_{i+1}$};
\draw[dotted] (y1) -- (y2) node[midway,above] {$\omega_i $};
\draw (z0) -- (z1) node[midway] {$\omega_{i+2}$};
\draw  (2.2,1.5)  node {$\cdots$};
\draw (z0) -- (z2) node[midway,right] {$\omega_{n}$};

\fill (z0) circle (2pt) node[below] {$z$};

\fill(w1)  circle (2pt) node[above] {$y_1$};
\fill (w2)  circle (2pt) node[above] {$y_{i-1}$};
\draw (y1)+(0.3,0.4) circle (15pt) node {$y_{i+1}$};
\draw (y2)+(0,0.5) circle (15pt) node {$y_{i}$};
\fill (z1) circle (2pt) node[above] {$y_{i+2}$};
\fill (z2) circle (2pt) node[above] {$y_n$};
\end{tikzpicture}
$$

$$
\quad
\begin{tikzpicture}[scale=0.6]
\draw  (-5,1)  node {$-$};
\coordinate (z0) at (0,0);
\coordinate (w1) at (-4,2);
\coordinate (w2) at (-2,2);
\coordinate (y1) at (0,2);
\coordinate (y2) at (-2,4);
\coordinate (z1) at (2,2);
\coordinate (z2) at (4,2);

\draw (z0) -- (w1) node[midway,left] {$\omega_1$};
\draw  (-2.2,1.5)  node {$\cdots$};
\draw  (z0) -- (w2) node[midway] {$\omega_{i-1}$};
\draw[dashed] (z0) -- (y1) node[midway,above] {$\omega_{i+1}$};
\draw[dotted] (y1) -- (y2) node[midway,above] {$\omega_i $};
\draw (z0) -- (z1) node[midway] {$\omega_{i+2}$};
\draw  (2.2,1.5)  node {$\cdots$};
\draw (z0) -- (z2) node[midway,right] {$\omega_{n}$};

\fill (z0) circle (2pt) node[below] {$z$};

\fill(w1)  circle (2pt) node[above] {$y_1$};
\fill (w2)  circle (2pt) node[above] {$y_{i-1}$};
\draw (y1)+(0.3,0.4) circle (15pt) node {$y_{i}$};
\draw (y2)+(0,0.5) circle (15pt) node {$y_{i+1}$};
\fill (z1) circle (2pt) node[above] {$y_{i+2}$};
\fill (z2) circle (2pt) node[above] {$y_n$};
\end{tikzpicture}
\begin{tikzpicture}[scale=0.6]
\draw  (-5,1)  node {$+$};
\coordinate (z0) at (0,0);
\coordinate (w1) at (-4,2);
\coordinate (w2) at (-2,2);
\coordinate (y1) at (-0.5,2);
\coordinate (y2) at (0.5,2);
\coordinate (z1) at (2,2);
\coordinate (z2) at (4,2);
\draw (z0) -- (w1) node[midway,left] {$\omega_1$};
\draw  (-2.2,1.5)  node {$\cdots$};
\draw  (z0) -- (w2) node[midway] {$\omega_{i-1}$};
\draw[dotted] (z0) -- (y1) node[midway,above] {$\omega_i\quad$};
\draw[dashed] (z0) -- (y2) node[midway,above] {$\quad \omega_{i+1}$};
\draw (z0) -- (z1) node[midway] {$\omega_{i+2}$};
\draw  (2.2,1.5)  node {$\cdots$};
\draw (z0) -- (z2) node[midway,right] {$\omega_{n}$};

\fill (z0) circle (2pt) node[below] {$z$};
\fill (w1) circle (2pt) node[above] {$y_1$};
\fill(w2)circle (2pt) node[above] {$y_{i-1}$};
\draw (y1)+(-0.2,0.5) circle (15pt) node {$y_{i+1}$};
\draw (y2)+(0.2,0.5) circle (15pt) node {$y_{i}$};
\fill (z1) circle (2pt) node[above]{$y_{i+2}$};
\fill (z2) circle (2pt) node[above]{$y_n$};
\draw  (6,2)  node {$\pmod J$};	
\end{tikzpicture}
$$
\mlabel{item:hmn1}
\end{enumerate}
This again has the combinatorial interpretation that the two branches for $y_i$ and $y_{i+1}$ can be exchanged at the cost of introducing two new trees with newer branches. 
\end{remark}

In summary, we give the following construction and relations for the free $\Omega$-multi-Novikov algebra on $X$ for later application. 

\begin{prop}
\begin{enumerate}
\item The quotient $\fnmna:=\bfk{\rm MT}_\Omega(X)/J$ equipped with multiplications ${\rhd}_\omega, \omega\in \Omega$, is the free $\Omega$-multi-Novikov algebra on $X$. 
\item 
Eqs.\,\meqref{e:n22}--\meqref{e:n21} hold in $\fnmna=\bfk{\rm MT}_\Omega(X)/J$. 
\end{enumerate}
\mlabel{p:rexpreln}	
\end{prop}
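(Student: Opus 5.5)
The plan is to transport everything along the multi-magmatic isomorphism $t$ of Lemma~\mref{l:otocbmam}, and then check the two congruences using only the generators of $I$ and the fact that $J$ is a two-sided ideal.

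For part (1), I first note that $h$ and $t=h^{-1}$ are mutually inverse linear bijections. By definition of the transported products, $u\rhd_\omega v=t(h(u)\novpromag_\omega h(v))$, so $t$ is an isomorphism of $\Omega$-multi-magmatic algebras. Hence $J=t(I)$ is the two-sided ideal of $\bfk{\rm MT}_\Omega(X)$ generated by the images under $t$ of the elements \meqref{e:ncmnrel1}--\meqref{e:ncmnrel2}. Since $t$ is multiplicative, these images are exactly the same expressions with $\novpromag$ replaced by $\rhd$ and with $x,y,z$ ranging over $\bfk{\rm MT}_\Omega(X)$. Consequently $t$ induces the isomorphism $\widetilde h^{-1}$ of \meqref{eq:isofnov}, and it respects the products. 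Because $\fmma/I$ is free on $X$ (it is the quotient of the free multi-magmatic algebra by the ideal generated by the defining identities), the universal property transfers to $\bfk{\rm MT}_\Omega(X)/J$ with the generating map $x\mapsto r(x)+J$. This is because $t(x)=r(x)$.

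For part (2), I use formula \meqref{e:rexpprod}, namely $u\rhd_\omega r(y_1,\omega_1,\dots;z)=r(u,\omega,y_1,\omega_1,\dots;z)$. I would first prove a small lemma: if $a\equiv b \bmod J$, then $r(y_1,\omega_1,\dots,y_{k},\omega_k;a)\equiv r(y_1,\omega_1,\dots,y_k,\omega_k;b)$. Here $r(\dots;a)$ for a tree $a$ means iterated left grafting $y_1\rhd_{\omega_1}(\cdots(y_k\rhd_{\omega_k}a))$, extended linearly. The lemma follows because $J$ is a left ideal for each $\rhd_\omega$.

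For \meqref{e:n22}, I set $x=x_1$, $y=r(x_2,\tau_2,\dots,x_m,\tau_m;z')$, $Z=r(y_2,\omega_2,\dots,y_n,\omega_n;z)$, $\omega=\tau_1$ and $\tau=\omega_1$. Then $(x\rhd_{\tau_1}y)\rhd_{\omega_1}Z$ is the left side of \meqref{e:n22}, and $(x\rhd_{\omega_1}Z)\rhd_{\tau_1}y$ is the right side, by two applications of \meqref{e:rexpprod}. Their difference is $t$ of a generator of type \meqref{e:ncmnrel2}, so it lies in $J$.

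For \meqref{e:n21}, I apply the generator of type \meqref{e:ncmnrel1} to $x=y_i$, $y=y_{i+1}$, $Z=r(y_{i+2},\omega_{i+2},\dots;z)$, with $\omega=\omega_i$ and $\tau=\omega_{i+1}$. Rewriting each term by \meqref{e:rexpprod} gives
$x\rhd_\omega(y\rhd_\tau Z)=r(y_i,\omega_i,y_{i+1},\omega_{i+1},\dots;z)$, $(x\rhd_\omega y)\rhd_\tau Z=r(y_i\rhd_{\omega_i}y_{i+1},\omega_{i+1},\dots;z)$, and the analogous forms for the two swapped terms. This yields \meqref{e:n21} in the case $i=1$. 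Applying the lemma with prefix $y_1,\omega_1,\dots,y_{i-1},\omega_{i-1}$ then gives the general case. I would also handle the edge case $i=n-1$, where $Z=r(z)$.

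The main obstacle is bookkeeping rather than substance. One has to make sure that \meqref{e:rexpprod} applies when the left factor is a linear combination or a product such as $y_i\rhd_{\omega_i}y_{i+1}$, which is itself an $r$-expression. One also has to ensure that the linear extension of $r(\dots)$ to non-basis arguments is consistent with $h$. I would settle this first by defining $r(\dots;\cdot)$ multilinearly through $t$ and checking it against \meqref{eq:tm1}.
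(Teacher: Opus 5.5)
Your proposal is correct and follows the paper's own route. For (1) the paper likewise transports the structure along $t$, sets $J=t(I)$ and uses the induced isomorphism $\widetilde{h}$ of Eq.~\eqref{eq:isofnov}; here ``multi-Novikov'' must be read as ``noncommuting multi-Novikov'', as you implicitly did. For (2), Remark~\ref{remark:hmn} substitutes the same elements into the generators \eqref{e:ncmnrel2} and \eqref{e:ncmnrel1}, rewrites via Eq.~\eqref{e:rexpprod}, and then multiplies on the left successively by $y_{i-1},\ldots,y_1$, which is exactly your prefix lemma based on $J$ being a two-sided ideal.
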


\vspd
\subsection{Statement of the main theorem}
\mlabel{ss:main}
For given sets $X$ and $\Omega$, we first state Theorem\,\mref{t:fncmncasubnov} that gives two explicit constructions of the free $\Omega$-noncommuting multi-Novikov algebras on $X$. 

\newcommand{\tildeh}{\eta}

The readers are invited to use the following diagram to keep track of the notions and maps in the constructions. 
\begin{equation}
\begin{split}
\xymatrix{
&& \fmma \ar@{->>}[d] &&\ar[ll]_{h~({\rm Eq.}\eqref{eq:tm1})}\bfk{\rm MT}_\Omega(X)\ar@{->>}[d]_{\pi}&& \ar@{=}[ll]\bfk{\rm MT}_\Omega(X) \\
\Omegax  \ar@{^(->}[rru] \ar[rr]  \ar[ddrrrr]_f  \ar@/_15pt/[rrrr]_{i} & & \fmma/I  &&\ar[ll]_{\tilde{h}~({\rm Eq.}~\eqref{eq:isofnov})}\fnmna\ar[dd]_{\bar{f}~({\rm Eq.}~\eqref{eq:barf})} && \bfk\NMNE       \ar[ll]_{\tildeh~({\rm Eq.}~\eqref{eq:tildeh})} \ar@{^(->}[u]  \ar@{-->}[ddll]_{g~({\rm Eq.}~\eqref{eq:mtop})} \ar@<.5ex>@{-->}[ddll]^{\phi~({\rm Eq.}~\eqref{e:phi})}  \\
&&&&\\
& &       &&\fnmdca& }
\end{split}
\mlabel{e:diag}
\end{equation}
The dashed arrows $\phi$ and $g$ are only needed in the proof of Theorem~\mref{t:fncmncasubnov}. 

For the first construction of the free $\Omega$-multi-Novikov algebra on $X$, we identify a special class of $r$-expressions. 

\begin{defi}
Let  $u=r(y_1,\omega_1,\cdots,y_n,\omega_n;z)$ be an $r$-expression in ${\rm MT}_\Omega(X)$. 
\begin{enumerate}
\item $u$ is called a {\bf nest} if  $u\in \Omegax$ or $y_i\in \Omegax, i\geq 2$ and the $r$-expression $y_1$  itself is a nest.
\mlabel{i:re1}
\item Let $\Omegax$ and $\Omega$ be two sets with total orders. A nest $u$ is called {\bf ordered} if $u$ satisfies one of the following conditions
\begin{enumerate}
\item $n=0$,  in which case $u\in \Omegax$ , 
\item $n>0$   and  $ y_1\in X $         , 
\item $n>0,  y_1=r(y'_1,\omega'_1,\cdots,y'_m,\omega'_m;z')$ and  $m>n,$
\item $n>0,  y_1=r(y'_1,\omega'_1,\cdots,y'_m,\omega'_m;z'),  m=n $ and  $ z'> z,$
\item $n>0,  y_1=r(y'_1,\omega'_1,\cdots,y'_m,\omega'_m;z'),  m=n, z'= z $ and  $(\omega_1',\cdots,\omega_n' )\geq (\omega_1,\cdots,\omega_n )$ with respect to the lexicographical order.
\end{enumerate}

\mlabel{i:re2}
\item An ordered nest $u=r(y_1,\omega_1,\cdots,y_n,\omega_n;z)\in{\rm MT}_\Omega(X)$ is called a {\bf noncommuting multi-Novikov element} if the sequence of all of its leaves, reading from the right is in increasing order with respect to the order of $X$.
Denote the set of noncommuting multi-Novikov elements by $\NMNE.$
\mlabel{i:re3}
\end{enumerate}
\mlabel{d:re}
\end{defi}

\begin{remark}\mlabel{rm:notorder}
The five cases in Definition \mref{d:re}\ref{i:re2} are depicted in the left-hand side of the flow diagram in Figure~\ref{fg:notorder}, while the three dashed boxes on the right-hand side indicate that $u=r(y_1,\omega_1,\cdots,y_n,\omega_n;z) $ is not ordered. Therefore, if $u$ is not ordered, then $u$ is governed by one of the three dashed boxes.

\begin{figure}
\centering
$$\begin{forest}
for tree={
draw=black,  
rectangle,
rounded corners=2pt, 
align=center,
inner sep=3pt,
l sep=12pt
}
[
{$n\geq 0 $}
[{(i). $n= 0 $}]
[{$n> 0$ }
[{(ii). $y_1\in X $}]
[{$y_1\notin X $ write $y_1=r(y'_1,\omega'_1,\cdots,y'_m,\omega'_m;z')$}
[{(iii). $m>n$}]
[{$m=n$}
[{(iv). $z'>z $}]
[{ $z'=z $}
[{(v). $(\omega_1',\cdots,\omega_n' )\geq (\omega_1,\cdots,\omega_n )$}]
[{ $(\omega_1',\cdots,\omega_n' )< (\omega_1,\cdots,\omega_n )$}, dashed]
]
[{ $z'<z $}, dashed]
]
[{$m<n$}, dashed ]
]
]
]
\end{forest}$$
\caption{}\label{fg:notorder}
\end{figure}
\end{remark}

\begin{ex}
We give the following examples to illustrate each of the cases in ~Definition \mref{d:re}. Let $a<b<c<d$ in $X$, and $\alpha<\beta<\gamma\in \Omega$. 
\begin{enumerate} \item 
$r(r(d,\alpha,a,\gamma,a,\alpha;b),\alpha,c,\beta,a,\alpha;d)$ is a nest.
\[  \begin{tikzpicture}[scale=0.4]
\coordinate (z) at (0,0);
\coordinate (y1) at (2,0);
\coordinate (y2) at (4,0);
\coordinate (y3) at (2,-2);
\coordinate (y4) at (2,2);
\coordinate (y5) at (0,2);
\coordinate (y6) at (-2,2);
\draw (z) -- (y4) node[midway,right] {${_\alpha}\,$};
\draw (z) -- (y5) node[midway,right] {$_\gamma$};
\draw (z) -- (y6) node[midway,left] {$_\alpha$};
\draw (y3) -- (y1) node[midway,right] {$_\beta$};
\draw (y3) -- (y2) node[midway,right] {$_\alpha$};
\draw (y3) -- (z) node[midway,right] {$_\alpha$};
\fill (z) circle (2pt) node[below] {$b$};
\fill (y1) circle (2pt) node[above] {$c$};
\fill (y2) circle (2pt) node[above] {$a$};
\fill (y3) circle (2pt) node[below] {$d$};
\fill (y4) circle (2pt) node[above] {$a$};
\fill (y5) circle (2pt) node[above] {$a$};
\fill (y6) circle (2pt) node[above] {$d$};
\end{tikzpicture}\]
\item 
\begin{enumerate}
\item $x\in X$ is an ordered nest;
\item $r(a,\alpha,c,\beta,a,\beta,d,\gamma;c)$ is an ordered nest;
\[  \begin{tikzpicture}[scale=0.4]
\coordinate (z) at (0,0);
\coordinate (y1) at (-3,2);
\coordinate (y2) at (-1,2);
\coordinate (y3) at (1,2);
\coordinate (y4) at (3,2);
\draw (z) -- (y1) node[midway,left] {$_\alpha\,$};
\draw (z) -- (y2) node[midway,left] {$_\beta$};
\draw (z) -- (y3) node[midway,right] {$_\beta$};
\draw (z) -- (y4) node[midway,right] {$_\gamma$};
\fill (z) circle (2pt) node[below] {$c$};
\fill (y1) circle (2pt) node[above] {$a$};
\fill (y2) circle (2pt) node[above] {$c$};
\fill (y3) circle (2pt) node[above] {$a$};
\fill (y4) circle (2pt) node[above] {$d$};
\end{tikzpicture}\]
\item $r(r(a,\gamma,d,\alpha,c,\beta,a,\beta;d),\gamma,a,\alpha;c)$ is an ordered nest;
\[  \begin{tikzpicture}[scale=0.4]
\coordinate (z) at (0,0);
\coordinate (y1) at (-3,2);
\coordinate (y2) at (-1,2);
\coordinate (y3) at (1,2);
\coordinate (y4) at (3,2);
\coordinate (y6) at (4,0);
\coordinate (y7) at (2,-2);
\draw (z) -- (y1) node[midway,left] {$_\gamma\,$};
\draw (z) -- (y2) node[midway,left] {$_\alpha$};
\draw (z) -- (y3) node[midway,right] {$_\beta$};
\draw (z) -- (y4) node[midway,right] {$_\beta$};
\draw (y7) -- (z) node[midway,right] {$_\gamma$};
\draw (y7) -- (y6) node[midway,right] {$_\alpha$};
\fill (z) circle (2pt) node[below] {$d$};
\fill (y1) circle (2pt) node[above] {$a$};
\fill (y2) circle (2pt) node[above] {$d$};
\fill (y3) circle (2pt) node[above] {$c$};
\fill (y4) circle (2pt) node[above] {$a$};
\fill (y6) circle (2pt) node[above] {$a$};
\fill (y7) circle (2pt) node[below] {$c$};
\end{tikzpicture}\]
\item 
$r(r(d,\alpha,b,\beta,a,\gamma;d),\gamma,a,\beta ,c,\alpha;b)$ is an ordered nest;
\[  \begin{tikzpicture}[scale=0.4]
\coordinate (z) at (0,0);
\coordinate (y1) at (-2,2);
\coordinate (y2) at (0,2);
\coordinate (y3) at (2,2);
\coordinate (y4) at (2,0);
\coordinate (y5) at (4,0);
\coordinate (y6) at (2,-2);
\draw (z) -- (y1) node[midway,right] {$_\alpha\,$};
\draw (z) -- (y2) node[midway,right] {$_\beta$};
\draw (z) -- (y3) node[midway,right] {$_\gamma$};
\draw (y6) -- (z) node[midway,right] {$_\gamma$};
\draw (y6) -- (y4) node[midway,right] {$_\beta$};
\draw (y6) -- (y5) node[midway,right] {$_\alpha$};
\fill (z) circle (2pt) node[below] {$d$};
\fill (y1) circle (2pt) node[above] {$d$};
\fill (y2) circle (2pt) node[above] {$b$};
\fill (y3) circle (2pt) node[above] {$a$};
\fill (y4) circle (2pt) node[above] {$a$};
\fill (y5) circle (2pt) node[above] {$c$};
\fill (y6) circle (2pt) node[below] {$b$};
\end{tikzpicture}\]
\item 
$r(r(d,\alpha,b,\beta,a,\gamma;b),\alpha,a,\beta ,c,\alpha;b)$ is an ordered nest.
\[  \begin{tikzpicture}[scale=0.4]
\coordinate (z) at (0,0);
\coordinate (y1) at (-2,2);
\coordinate (y2) at (0,2);
\coordinate (y3) at (2,2);
\coordinate (y4) at (2,0);
\coordinate (y5) at (4,0);
\coordinate (y6) at (2,-2);
\draw (z) -- (y1) node[midway,right] {$_\alpha\,$};
\draw (z) -- (y2) node[midway,right] {$_\beta$};
\draw (z) -- (y3) node[midway,right] {$_\gamma$};
\draw (y6) -- (z) node[midway,right] {$_\alpha$};
\draw (y6) -- (y4) node[midway,right] {$_\beta$};
\draw (y6) -- (y5) node[midway,right] {$_\alpha$};
\fill (z) circle (2pt) node[below] {$b$};
\fill (y1) circle (2pt) node[above] {$d$};
\fill (y2) circle (2pt) node[above] {$b$};
\fill (y3) circle (2pt) node[above] {$a$};
\fill (y4) circle (2pt) node[above] {$a$};
\fill (y5) circle (2pt) node[above] {$c$};
\fill (y6) circle (2pt) node[below] {$b$};
\end{tikzpicture}\]
\end{enumerate}
\item $r(r(r(d,\beta,d,\alpha,c,\gamma,c,\beta;c),\beta,c,\beta,b,\alpha;d),\gamma,a,\beta;a)$ is a noncommuting multi-Novikov element.
\[  \begin{tikzpicture}[scale=0.4]
\coordinate (z) at (0,0);
\coordinate (y1) at (0,2);
\coordinate (y4) at (2,2);
\coordinate (y5) at (4,0);
\coordinate (y2) at (2,-2);
\coordinate (y3) at (1,4);
\coordinate (yn) at (-2,2);
\coordinate (x) at (-1,4);
\coordinate (z1) at (-3,4);
\coordinate (w) at (-5,4);
\draw (z) -- (y1) node[midway,right] {$_\beta\,$};
\draw (z) -- (y4) node[midway,right] {$_\alpha$};
\draw (z) -- (yn) node[midway,right] {$_\beta$};
\draw (yn) -- (y3) node[midway,left] {$_\beta$};
\draw (yn) -- (x) node[midway,left] {$_\gamma$};
\draw (yn) -- (z1) node[midway,left] {$_\alpha$};
\draw (yn) -- (w) node[midway,left] {$_\beta$};
\draw (y2) -- (y5) node[midway,right] {$_\beta$};
\draw (y2) -- (z) node[midway,right] {$_\gamma$};
\fill (z) circle (2pt) node[below] {$d$};
\fill (y1) circle (2pt) node[above] {$c$};
\fill (y4) circle (2pt) node[above] {$b$};
\fill (y5) circle (2pt) node[above] {$a$};
\fill (y2) circle (2pt) node[below] {$a$};
\fill (y3) circle (2pt) node[above] {$c$};
\fill (yn) circle (2pt) node[below] {$c$};
\fill (x) circle (2pt) node[above] {$c$};
\fill (z1) circle (2pt) node[above] {$d$};
\fill (w) circle (2pt) node[above] {$d$};
\end{tikzpicture}\]
\end{enumerate}
\mlabel{g:nmne}
\end{ex}
\vspc

In prepare for the second construction of the free $\Omega$-multi-Novikov algebra on $X$,  recall from Theorem~\mref{t:fncmdca} that the free $\Omega$-\ncmdca $\bfk_{{\rm NC},\Omega}\{X\}$ generated by $X$ is the commutative polynomial algebra on the set of differential monomials $(\partial_{\omega_1}\cdots \partial_{\omega_n})(x)$, where $\omega_1,\ldots,\omega_n\in \Omega$, $n\in \N$, $x\in X$. 
By Theorem~\mref{p:mdaind}.\mref{p:mnovfromncmdca}, $\bfk_{{\rm NC},\Omega}\{X\}$ is a noncommuting multi-Novikov algebra with the multiplications (here we will use $\ast_\omega$ instead of $\rhd_\omega$ to avoid conflict of notations)
\begin{equation}
x \ast_\omega y:=x\partial_\omega (y),\quad \omega \in\Omega.
\mlabel{eq:indnov}
\end{equation}

Let $\fnmdca$ denote the noncommuting multi-Novikov subalgebra of $\fnmdc$ generated by $X$. 
Since $\fnmna$ is the free noncommuting multi-Novkov algebra generated by $X$, the natural inclusion $f:X\to \fnmdca$ extends to a unique surjection of noncommuting multi-Novkov algebras 
\begin{equation}
\bar{f}:\fnmna \to \fnmdca,
\mlabel{eq:barf}
\end{equation}
which will be shown to be bijective. 

Define a linear map
\begin{align}
\begin{split}
\tildeh: \bfk\NMNE\hookrightarrow \bfk{\rm MT}_\Omega(X)\twoheadrightarrow&~   \fnmna=\bfk{\rm MT}_\Omega(X)/J\\
r(z)\mapsto&~ r({z})+J\equiv r({z})~\mod{J},\\ 
r(y_1,\omega_1,\cdots,y_n,\omega_n;z)\mapsto&~ r(y_1,\omega_1,\cdots,y_n,\omega_n;z)+J\\
&~~~~~\equiv y_1 \rhd_{\omega_1} r(y_2,\omega_2,\cdots,y_n,\omega_n;z)~\mod{J}\\
&~~~~~\equiv  \tildeh(y_1) \rhd_{\omega_1} \tildeh\big(r(y_2,\omega_2,\cdots,y_n,\omega_n;z))\\
\end{split} \mlabel{eq:tildeh}
\end{align}
for  $z,y_i \in X,i=2,\cdots,n,\omega_i\in \Omega$,
where $\rhd_{\omega_1}$ is the multi-Novikov product on $ \fnmna$.

\begin{thm} $(${\bf Main theorem}$)$
\begin{enumerate}
\item 
The linear map $\tildeh: \bfk \NMNE \to  \fnmna$ is bijective. 
\mlabel{i:main1}
\item 
The cosets modulo $J$ of the noncommuting multi-Novikov elements in $\NMNE$ form a linear basis for the free noncommuting multi-Novikov algebra $\fnmna$.
\mlabel{i:main2}
\item
The noncommuting multi-Novikov algebra homomorphism $\bar{f}:\fnmna \to \fnmdca$ in Eq.\,\meqref{eq:barf} is an isomorphism. 
\mlabel{i:main3}
\item 
The space $\fnmdca$, together with operations $\ast_\omega$ in Eq.\,\meqref{eq:indnov} for $\omega \in\Omega$, is a free noncommuting multi-Novikov algebra on $X$.    
\mlabel{i:main4}
\end{enumerate}
\mlabel{t:fncmncasubnov}		
\end{thm}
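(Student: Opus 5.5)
The plan is the Dzhumadil'daev--L\"ofwall strategy: a spanning argument in $\fnmna$ combined with a linear independence argument in $\fnmdca$, tied together by the commutative diagram \meqref{e:diag}. Concretely, we will show that $\tildeh$ is surjective, and that the composite $g:=\bar f\circ\tildeh:\bfk\NMNE\to\fnmdca$ is injective. Then $\tildeh$ is injective as well, which gives \mref{i:main1}, and \mref{i:main2} is a restatement of it. Since $\bar f$ is surjective by construction and $\bar f=g\circ\tildeh^{-1}$ is a composite of injective maps, $\bar f$ is bijective; this gives \mref{i:main3}. Finally, \mref{i:main4} follows by transporting the universal property of $\fnmna$ from Proposition~\mref{p:rexpreln} along $\bar f$.

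\emph{Spanning.} We argue by induction on $\tdeg$, and within a fixed total degree on a suitable secondary order: number of branches at the root, then root label, then edge-label sequence, then leaf sequence. Let $u=r(y_1,\omega_1,\cdots,y_n,\omega_n;z)$ be an arbitrary $r$-expression. First, Eq.~\meqref{e:n21} lets us rewrite $u$ modulo $J$ so that only $y_1$ is non-leaf. The correction terms have fewer root branches: the two branches $y_i,y_{i+1}$ are merged into $y_i\rhd_{\omega_i}y_{i+1}$. Swapping an adjacent pair of branches costs only such terms, so we can push all non-leaf branches into position $1$, and repeatedly merging there makes $y_1$ absorb them. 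Applying the same procedure recursively to $y_1$ produces a nest. Second, Eq.~\meqref{e:n22} exchanges the two segments rooted at $z$ and $z'$ while fixing the leftmost leaf. This exchange swaps the pairs $(m,z',(\tau_i))$ and $(n,z,(\omega_i))$, so we use it to land in cases (iii)--(v) of Definition~\mref{d:re}\mref{i:re2}; the dashed boxes in Figure~\ref{fg:notorder} are exactly the cases where the swap is applied. Third, we sort the leaves. Within one segment, the adjacent-branch swap of Eq.~\meqref{e:n21} reorders leaves at the cost of lower terms. Across segments, we must argue, as in \mcite{DL}, that leaves $y_i\in X$ together with their edge labels can be permuted among the levels of the nest modulo lower terms, while the shape data $(n,z,\omega)$ is kept. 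The main care in this step is to check that every rewriting step strictly decreases the chosen order, so that the process terminates.

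\emph{Independence.} This is the step I expect to be the main obstacle. We compute $g$ explicitly in $\bfk_{\NC,\Omega}\{X\}$. A nest evaluates to a product in which the root variable $z$ is differentiated by the word $\partial_{\omega_1}\cdots$ coming from the chain of $y_1$'s, and the leaf variables carry the remaining single derivations; the Leibniz rule adds further terms. We will define a leading monomial with respect to a monomial order on the commutative polynomial algebra over $\ncmder(X)$, for instance ordering by the length of the longest derivative word, then by the word, then by its variable. The aim is to show that the leading monomial of $g(u)$ has coefficient $1$ and that distinct elements of $\NMNE$ have distinct leading monomials. The leading monomial should be
\[\prod_j(\partial_{\omega^{(j)}_1}\cdots\partial_{\omega^{(j)}_{n_j}})(z_j)\prod_{\text{leaves}}x,\]
one factor per root $z_j$ of the nest levels. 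The ordering conditions (iii)--(v), together with the increasing leaf condition, are designed so that $u$ can be recovered uniquely from this monomial. Because the derivations do not commute, the words are genuinely noncommutative, and this is why lexicographic condition (v) on edge labels is needed. I would first verify injectivity of the leading-monomial assignment on small examples, such as Example~\mref{g:nmne}, before writing the general triangularity argument.
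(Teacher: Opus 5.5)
\textbf{The gap: Step 1 of your spanning argument.} It does not work as described.

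Eq.~\eqref{e:n21} only exchanges two adjacent root branches $y_i,y_{i+1}$ (the labels $\omega_i,\omega_{i+1}$ stay in place), at the cost of terms with one fewer root branch. The main term keeps the same multiset of root branches. Since $J$ is homogeneous, rewriting inside a non-leaf branch never turns it into a leaf. So under the moves you propose, the number of non-leaf root branches is invariant modulo trees with fewer root branches. You cannot push two non-leaf branches into position $1$, and merging only happens in the correction terms. For instance, $r\big(r(a,\alpha;b),\alpha,r(c,\alpha;d),\alpha;e\big)$ is never reduced to a nest this way.

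The missing ingredient is right-commutativity, Eq.~\eqref{e:n22}, which the paper uses at exactly this point. First put a non-leaf branch in position $1$, and make $y_1=r(x_1,\tau_1,x_2,\ldots,x_m,\tau_m;z')$ a nest by induction on degree, so that $x_2,\ldots,x_m\in X$. Then $u\equiv r\big(r(x_1,\omega_1,y_2,\ldots,y_n,\omega_n;z),\tau_1,x_2,\ldots,x_m,\tau_m;z'\big)\bmod J$. Its only non-leaf branch is the first one, which has smaller degree. In the example this is $(a\rhd_\alpha b)\rhd_\alpha W=(a\rhd_\alpha W)\rhd_\alpha b$.

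This move replaces the root length $n$ by $m$, which may be larger. So your secondary order, which compares root branches first, does not decrease along it. The reduction to nests has to be organized by induction on degree, using the number of root branches only for the correction terms of Eq.~\eqref{e:n21}.

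A smaller point: in Step 3 the leaves must move without their edge labels, as in Eq.~\eqref{e:n21} and the paper's Step 3. The labels at a level form the noncommutative word $\partial_{\omega_1}\cdots\partial_{\omega_n}$ on its root, and they are part of the shape data you want to keep fixed.

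\textbf{Your independence step is correct and genuinely different from the paper's.} Your leading monomial is exactly the paper's $\phi(u)$ from Eq.~\eqref{e:phi}.
\begin{enumerate}
\item In $g(u)$ the inner nest is a left factor and is never differentiated.
\item At a level $r(y_1,\omega_1,\ldots,y_n,\omega_n;z)$, a leaf $y_k$ ($k\geq 2$) can receive only letters among $\omega_1,\ldots,\omega_{k-1}$, while $z$ always keeps $\omega_n$.
\item Hence every other Leibniz term has a level $j$ all of whose factors carry words of length $<n_j$.
\item Compare monomials by their decreasingly sorted lists of factor keys $(|w|,w,x)$, lexicographically. Then $\phi(u)$ is the strict leading monomial, with coefficient $1$. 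Comparing only the longest word, as you suggest, is not enough, because levels of equal length occur.
\item Injectivity of $\phi$ on $\NMNE$ is the identity $\psi\phi=\mathrm{id}$ from Proposition~\ref{p:nmneipcd}.
\end{enumerate}
This proves linear independence of $\{g(u)\}$ directly. It needs neither Lemma~\ref{t:fnovgen}, nor surjectivity of $g$, nor the restriction to finite $X_0,\Omega_0$, so independence no longer depends on the spanning argument. The paper's dimension count only needs $g$ to be graded and avoids analysing all Leibniz terms. In exchange, it needs the full bijectivity of $\phi$ onto $\fnmdca$ and a comparison of finite-dimensional homogeneous pieces.
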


The proof of the theorem is given in Section~\mref{ss:proof} after preparational results in Section~\mref{ss:prep}, again referring to Diagram \mref{e:diag} for the related notions. In a nutshell, the proof (of Item\,\mref{i:main1}) consists of the following steps. 
\begin{enumerate}
\item We first prove that $\tildeh$ is surjective (Proposition\,\mref{p:fnmnsbmne}). Then together with the surjectivity of $\bar{f}$, the composition $\bar{f}\tildeh:\bfk\NMNE \ra \fnmdca$ is surjective. 
\item To prove that $\tildeh$ is injective, we show that the composition $\bar{f}\tildeh$ is injective. For this purpose, we give an independent description of $\bar{f}\tildeh$ as the map $g$ defined in Eq.\,\eqref{eq:mtop}. See Proposition\,\mref{p:fnnov}. 
\item To prove the injectivity of $g$, we use a pigeonhole principle argument. More precisely, we show that the surjectivity of $g$ as the composition $\bar{f}\tildeh$ is preserved on the finite-dimensional homogeneous components. We then introduce another graded linear map 
$$\phi:\bfk\NMNE \ra \fnmdca$$ 
which is bijective (Proposition\,\mref{p:nmneipcd}). Thus the surjection $g$ on homogeneous components is between vector spaces of the same dimension and hence must be injective.  
\end{enumerate}

\subsection{Preparations}
\mlabel{ss:prep}
We present the preliminary results as outlined above. 

\subsubsection{The surjectivity of $\tildeh$}
\mlabel{sss:surjg}
For a given $r$-expression $u=r(y_1,\omega_1,\cdots,y_n,\omega_n;z)$, define its {\bf length} to be $n$, its {\bf degree} to be its total number of vertices and leaves, and its {\bf weight} to be the triple 
$$|u|:=(d,n,n+m),$$ 
where $d$ is the degree of $u$, $n$ is the length of $u$, and $m$ is the length of $y_1$. 

\begin{prop}
The map $\tildeh:\bfk\NMNE\to  \fnmna$ in Eq.\,\meqref{eq:tildeh} is surjective.
\mlabel{p:fnmnsbmne}
\end{prop}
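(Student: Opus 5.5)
Since $\fnmna=\bfk{\rm MT}_\Omega(X)/J$ is spanned by the cosets of all $r$-expressions, it suffices to show that every $r$-expression $u=r(y_1,\omega_1,\cdots,y_n,\omega_n;z)$ is congruent modulo $J$ to a linear combination of elements of $\NMNE$. I will prove this by induction on the weight $|u|=(d,n,n+m)$, with the lexicographic order on triples refined by a final comparison of the root decoration $z$ and the edge word $(\omega_1,\dots,\omega_n)$. Both relations \eqref{e:n22} and \eqref{e:n21} preserve the degree $d$ and the multiset of leaf decorations, so within a fixed degree there are only finitely many trees, and the induction is well founded.

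\emph{Step 1: reduce to nests.} Let $i\geq 2$ be the largest index with $y_i\notin X$. Apply \eqref{e:n21} at positions $i-1,i$ to move $y_i$ one step to the left. The two correction terms $r(\ldots,y_{i-1}\rhd_{\omega_{i-1}}y_i,\ldots)$ and $r(\ldots,y_i\rhd_{\omega_{i-1}}y_{i-1},\ldots)$ have length $n-1$ and the same degree, so they are smaller in weight. The main term has $y_i$ in position $i-1$. Repeating this pushes every non-letter branch into position $1$. If two non-letter branches meet there, the one that is displaced is grafted into the first via a correction term, again of smaller length. Applying the induction hypothesis to $y_1$, which has smaller degree, and using that $\rhd_{\omega_1}$ is bilinear, we may assume $y_1\in\NMNE$, so $u$ is a nest. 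I expect the careful bookkeeping of this step to be routine.

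\emph{Step 2: make the nest ordered.} Suppose $u$ is a nest that is not ordered, with $y_1=r(y'_1,\omega'_1,\cdots,y'_m,\omega'_m;z')$. By Remark~\mref{rm:notorder}, one of three cases holds: $m<n$; or $m=n$ and $z'<z$; or $m=n$, $z'=z$ and $(\omega'_\bullet)<(\omega_\bullet)$. In each case apply \eqref{e:n22}, which exchanges the segment rooted at $z$ with the segment rooted at $z'$ while keeping the leftmost leaf fixed. The new outer root is $z'$ with length $m$, and the new inner nest has length $n$. Since $n+m$ and $d$ are unchanged, I need an auxiliary order to show the result is smaller. When $m<n$, the new outer length $m$ is less than $n$, so the weight drops. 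In the other two cases the swap strictly improves the comparison of $(z,\omega)$ data in the refined order, and the result falls into case (iv) or (v).

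The outer letters $y_2,\dots,y_n$ have now moved into the inner nest, so the result may fail to be a nest at the inner level. I fix this by recursing on the inner nest $y_1$, which has smaller degree.

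\emph{Step 3: order the leaves.} For an ordered nest, the leaves at each level are letters in positions $\geq2$. Using \eqref{e:n21} on two adjacent letter leaves $y_i,y_{i+1}\in X$, we can swap them. The cost is two terms of length $n-1$, which are smaller in weight. This lets us sort the leaves at each level into increasing order read from the right. I anticipate the main obstacle here: the defining condition on elements of $\NMNE$ concerns the sequence of \emph{all} leaves read from the right across levels, and exchanging leaves between levels is not directly supplied by \eqref{e:n21}. I would first try to combine \eqref{e:n22} (swapping segments) with \eqref{e:n21} to move a leaf from an inner level to an outer one, checking that the leading term keeps the same shape and the corrections have lower weight. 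The delicate point is to verify that these reorderings do not destroy the orderedness obtained in Step 2. Should that check fail, I would fall back on a combined leading-term order comparing shape data first and leaf sequences second.
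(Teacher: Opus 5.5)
The gap is in Step~1. Relation \eqref{e:n21} only says that, modulo two terms of length $n-1$, adjacent children $y_i,y_{i+1}$ of the root may be interchanged, with the edge labels staying in place. The leading term keeps exactly the same children. So when two non-letter branches meet in positions $1$ and $2$, \eqref{e:n21} gives $r(y_2,\omega_1,y_1,\omega_2,\ldots;z)$ plus the two grafted correction terms. The displaced branch is grafted only in the corrections; the leading term still has a non-letter in position $2$. Applying the induction hypothesis to the children does not help either, since non-letters stay non-letters.

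Hence a tree like $r(Y_1,\omega_1,Y_2,\omega_2;z)$ with $Y_1,Y_2\notin X$ is never turned into a nest by your procedure. Your sentence ``we may assume $y_1\in\NMNE$, so $u$ is a nest'' is justified only when $y_2,\ldots,y_n$ are already letters. The missing ingredient is the right-commutative relation \eqref{e:n22}, which is how the paper builds nests:
\begin{itemize}
\item by induction on degree, each $y_i$ is a nest;
\item using \eqref{e:n21} (the paper sorts the children by decreasing degree), a non-letter occupies position $1$, so $y_1=r(x_1,\tau_1,x_2,\ldots,x_m,\tau_m;z')$ with $x_2,\ldots,x_m\in X$;
\item then \eqref{e:n22} gives
\[
u\equiv r\big(r(x_1,\omega_1,y_2,\ldots,y_n,\omega_n;z),\tau_1,x_2,\ldots,x_m,\tau_m;z'\big)\mod J.
\]
\end{itemize}
All remaining non-letter branches now sit in the inner tree, whose degree is strictly smaller than that of $u$ (it has lost $z'$ and $x_2,\ldots,x_m$). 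So the induction hypothesis rewrites it as a combination of nests $v$, and each $r(v,\tau_1,x_2,\ldots,x_m,\tau_m;z')$ is a nest.

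Step~3 is also left open, and the cross-level exchange you are unsure about is exactly what the paper supplies. Start from $r(r(x_1,\tau_1,\ldots,x_m,\tau_m;z'),\omega_1,y_2,\ldots,y_n,\omega_n;z)$ and proceed as follows:
\begin{itemize}
\item use \eqref{e:n21} to move $x_m$ to the front of the $z'$-segment;
\item use \eqref{e:n22} to swap the two segments;
\item use \eqref{e:n21} to interchange $x_m$ and $y_2$ in the now-inner $z$-segment;
\item use \eqref{e:n22} to swap back;
\item use \eqref{e:n21} to move $y_2$ to the last slot of the $z'$-segment.
\end{itemize}
The net effect is that $x_m$ and $y_2$ trade segments while the shape and edge words are unchanged. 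Together with sorting inside each segment, this orders the whole leaf sequence.

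Your concern in Step~2 is unnecessary: \eqref{e:n22} applied to a nest yields a nest, and applied at any depth (as $J$ is an ideal) it permutes the segments of a nest with no correction terms at all.

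When you write the chain out, watch the correction terms from the middle \eqref{e:n21} step. There the outer segment has length $m$, possibly larger than $n$, so these terms are not obviously below the tree being reduced in your order. Applying \eqref{e:n22} to them first brings the outer length down to $n-1$, after which the induction hypothesis applies.
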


\begin{proof}
Since $\bfk\NMNE$ is a subspace of $\bfk{\rm MT}_\Omega(X)$ and 
$\fnmna=\bfk{\rm MT}_\Omega(X)/J$, we just need to prove 
$\bfk{\rm MT}_\Omega(X)=\bfk\NMNE+J.$
For this purpose, we just need to prove
\begin{equation}
u \equiv 0 \mod  \bfk \NMNE + J, \quad \forall u\in {\rm MT}_\Omega(X).
\mlabel{e:modsurj2}
\end{equation}
We will verify this by induction on the weight $|u|$ of $u$ (with respect to the lexicographical order). For $|u|=(1,0,0)$ it is immediate since $u$ is already in $\NMNE$.

For the inductive step, suppose that the result holds for $u$ with $(1,0,0)<|u|<(d,n,n+m)$, for $d,n,m\in \Z_+$.
Consider $u=r(y_1,\omega_1,\cdots ,y_n,\omega_n;z)\in  {\rm MT}_\Omega(X)$ with $|u|=(d,n,n+m)$. 

As a preliminary step, we first show that $y_1,\ldots,y_n$  can be arranged in decreasing order of degrees when read from left to right. Explicitly, if the degree of $y_{i+1}$ is greater than the degree of $y_{i}$, for some $i=1,\cdots,n-1$, then we prove that $y_{i+1}$ can be exchanged with $y_{i}$. Applying Eq.~\meqref{e:n21} (see Proposition\,\mref{p:rexpreln}), we have
\begin{equation}
\begin{split}
u=&~r(y_1,\omega_1,\cdots ,y_{i},\omega_{i},y_{i+1},\omega_{i+1},\cdots,y_n,\omega_n;z)\\
\equiv&~r\big(y_1,\omega_1,\cdots y_{i-1},\omega_{i-1}, y_{i}\rhd_{\omega_{i}}y_{i+1},\omega_{i+1},\cdots,y_n,\omega_n;z\big)\\
&-r\big(y_1,\omega_1,\cdots y_{i-1},\omega_{i-1}, y_{i+1}\rhd_{\omega_{i}}y_{i},\omega_{i+1},\cdots,y_n,\omega_n;z\big)\\
&+r(y_1,\omega_1,\cdots ,y_{i-1},\omega_{i-1},y_{i+1},\omega_{i},y_{i},\omega_{i+1},\cdots,y_n,\omega_n;z) ~~\mod{J}\\
\equiv&~r(y_1,\omega_1,\cdots ,y_{i-1},\omega_{i-1},y_{i+1},\omega_{i},y_{i},\omega_{i+1},\cdots,y_n,\omega_n;z) ~~\mod  \bfk \NMNE + J.\\
\end{split}\label{eq:decre}
\end{equation}
Here, the weights of the first two monomials on the right-hand-side are given by 
$$\big|r\big(y_1,\omega_1,\cdots y_{i-1},\omega_{i-1}, y_{i}\rhd_{\omega_{i}}y_{i+1},\omega_{i+1},\cdots,y_n,\omega_n;z\big)\big|=(d,n-1,n-1+m)<|w|,$$
$$\big|r\big(y_1,\omega_1,\cdots y_{i-1},\omega_{i-1}, y_{i+1}\rhd_{\omega_{i}}y_{i},\omega_{i+1},\cdots,y_n,\omega_n;z\big)\big|=(d,n-1,n-1+m)<|w|.$$
So the induction hypothesis applies to these monomials.		
Repeating this process if necessary, we may assume that the degrees of $y_1,\ldots,y_n$ are in the decreasing order reading from left to right.	

We next take three steps in following the three stages of Definition~\mref{d:re} to show that the element $u$ can be reduced to a noncommuting multi-Novikov element. 
Specifically, the first step fulfills the condition in Definition\mref{d:re}.\mref{i:re1}, which makes $u$ into a nest.
The second step fulfills the conditions in Definition\,\mref{d:re}.\mref{i:re2}, which makes $u$ into an ordered nest. 
The third step fulfills the condition in Definition\,\mref{d:re}.\mref{i:re3}, reducing $u$ to a noncommuting multi-Novikov element. 

\noindent {\bf Step 1 (making $u$ a nest).}
If $u$ is a nest, then we move to Step 2. If $u$ is not a nest, by the induction hypothesis, each $y_i$ is a nest for $1\leq i\leq n$. 
Thus the elements $x_2,\cdots, x_m$ in $y_1=r(x_1,\tau_1,\cdots,x_m,\tau_m;z')$ belong to $X$.
Applying Eq.~\eqref{e:n22}, we have
\begin{equation*}
\begin{split}
u\equiv&~ r\big(r(x_1,\tau_1,\cdots,x_m,\tau_m;z'),\omega_1,y_2,\cdots ,y_n,\omega_n;z\big)\\
\equiv&~	r\big(r(x_1,\omega_1,y_2,\cdots ,y_n,\omega_n;z),\tau_1,\cdots,x_m,\tau_m;z'\big)	\mod  J.
\end{split}
\end{equation*}
Since the degree of $r(x_1,\omega_1,y_2,\cdots ,y_n,\omega_n;z)$ is less than $u$,  by the induction hypothesis again,
$$r(x_1,\omega_1,y_2,\cdots ,y_n,\omega_n;z)\equiv v \mod  J$$ for some nest $v$.
Hence
$u\equiv r\big(r(x_1,\omega_1,y_2,\cdots ,y_n,\omega_n;z),\tau_1,\cdots,x_m,\tau_m;z'\big)\equiv 
r\big(v,\tau_1,\cdots,x_m,\tau_m;z'\big), \mod J$. 
Then we proceed to step 2 for the nest $r\big(v,\tau_1,\cdots,x_m,\tau_m;z'\big)$.

\noindent {\bf Step 2 (making $u$ ordered). } 
If $u$ is ordered, we proceed directly to Step 3. So suppose that $u$ is not ordered.
By Remark~\ref{rm:notorder}, $u$ satisfies a condition in one of the three dashed boxes in Figure~\ref{fg:notorder}.
We further combine the three conditions into two cases: Case 1 corresponds to the first dashed box, and Case 2 corresponds to  the second and third dashed boxes,  for $n>0, y_1=r(x_1,\tau_1,\cdots,x_m,\tau_m;z')$.

\noindent{\bf Case 1.} If $m<n$, applying Eq.\,\meqref{e:n22} (thanks to Proposition\,\mref{p:rexpreln}) yields the following congruence of nests
\begin{equation*}
\begin{split}
&r\big(r(x_1,\tau_1,\cdots,x_m,\tau_m;z'),\omega_1,y_2,\cdots ,y_n,\omega_n;z\big)\\
\equiv&~	r\big(r(x_1,\omega_1,y_2,\cdots ,y_n,\omega_n;z),\tau_1,\cdots,x_m,\tau_m;z'\big)	\mod  J.
\end{split}
\end{equation*}
Then the length $n$ of $r(x_1,\omega_1,y_2,\cdots ,y_n,\omega_n;z)$ is strictly greater than the length $m$ of
$$r\big(r(x_1,\omega_1,y_2,\cdots ,y_n,\omega_n;z),\tau_1,\cdots,x_m,\tau_m;z'\big).$$ 
Hence the latter is ordered.

\noindent{\bf Case 2.} If either $m=n$ and $z'<z$, or $z'=z$ and $(\tau_{1},\cdots,\tau_{m})<(\omega_1,\cdots,\omega_n)$,  
then applying Eq.~\eqref{e:n22} gives the following congruence of nests
\begin{equation*}
\begin{split}
&r\big(r(x_1,\tau_1,\cdots,x_m,\tau_m;z'),\omega_1,y_2,\cdots ,y_n,\omega_n;z\big)\\
\equiv	&~r\big(r(x_1,\omega_1,y_2,\cdots ,y_n,\omega_n;z),\tau_1,\cdots,x_m,\tau_m;z'\big)	\mod   J.
\end{split}
\end{equation*}
The right-hand side is ordered.		

We then proceed to Step 3 with the resulting ordered nest obtained in both cases.

\noindent{\bf Step 3~(adjust the leaf decorations such that they are in the increasing order reading from right to left).}
Let  $u=r(y_1,\omega_1,\cdots ,y_n,\omega_n;z)$ be an ordered nest for $|u|=(d,n,n+m)$ and $y_1=r(x_1,\tau_1,\cdots,x_m,\tau_m;z')$.
If $u$ is a noncommuting multi-Novikov element, then $$u \equiv 0 \mod  \bfk \NMNE + J.$$ 
Now suppose that $u$ is not a noncommuting multi-Novikov element. 
By Eq.~\eqref{eq:decre}, we may interchange the decorations of the leaves attached to the same root. 
Without loss of generality, consider the ordered nest
$$r\big(r(x_1,\tau_1,\cdots,x_m,\tau_m;z'),\omega_1,y_2,\cdots ,y_n,\omega_n;z\big),$$ 
where $ x_2\geq\cdots\geq x_m\in X$ and $y_2\geq y_3\geq\cdots\geq y_n\in X$ .

Since the decorations of the leaves attached to the same root commute modulo $J$, it suffices to show that the decorations of the leaves associated with distinct roots are also commutative.
We now show that $x_m$ and $y_2$ can be interchanged. To this end, we proceed as follows. By repeatedly applying Eqs.~\eqref{e:n22} and~\eqref{eq:decre}, we obtain
\begin{equation*}
\begin{split}
&r\big(r(x_1,\tau_1,\cdots,x_m,\tau_m;z'),\omega_1,y_2,\cdots ,y_n,\omega_n;z\big)\\
\overset{\eqref{eq:decre}}{\equiv}&~	r\big(r(x_m,\tau_1,x_1,\cdots,x_{m-1},\tau_m;z'),\omega_1,y_2,\cdots ,y_n,\omega_n;z\big)	\mod  \bfk \NMNE + J\\
\overset{\eqref{e:n22}}{\equiv}&~r\big(r(x_m,\omega_1,y_2,\cdots ,y_n,\omega_n;z),\tau_1,x_1,\cdots,x_{m-1},\tau_m;z'\big)	\mod  \bfk \NMNE + J\\
\overset{\eqref{eq:decre}}{\equiv}&~r\big(r(y_2,\omega_1,x_m,\cdots ,y_n,\omega_n;z),\tau_1,x_1,\cdots,x_{m-1},\tau_m;z'\big)	\mod  \bfk \NMNE + J\\
\overset{\eqref{e:n22}}{\equiv}&~r\big(r(y_2,\tau_1,x_1,\cdots,x_{m-1},\tau_m;z'),\omega_1,x_m,\cdots ,y_n,\omega_n;z\big)	\mod  \bfk \NMNE + J\\
\overset{\eqref{eq:decre}}{\equiv}&~r\big(r(x_1,\tau_1,\cdots,x_{m-1},\tau_{m-1},y_2,\tau_m;z'),\omega_1,x_m,\omega_2,y_3\cdots ,y_n,\omega_n;z\big)	\mod  \bfk \NMNE + J.\\
\end{split}
\end{equation*}
Thus $u \equiv 0 \mod  \bfk \NMNE + J.$

Therefore, for any $u\in {\rm MT}_\Omega(X)$, by performing Steps 1–3, we obtain
\begin{equation*}
u \equiv 0 \mod  \bfk \NMNE + J,
\end{equation*}
as desired.
\end{proof}

\subsubsection{Composition of $\bar{f}$ and $\tildeh$} 
For a differential monomial $u\in \fnmdc$, defined to be a product of generators  $(\partial_{\omega_1}\cdots \partial_{\omega_n})(x)$, define
$$p(u):=\text{number of times that partial derivatives appear in } u - \text{ that of variables}.$$
The following notion is the noncommuting analog of the one given in\,\cite{BrD}. 

\begin{defi}
A differential monomial in $\fnmdc$ is said to satisfy the {\bf populated condition} if $p(u)=-1$.
\mlabel{d:pop}
\end{defi}

\begin{lem} The noncommuting multi-Novikov subalgebra $\fnmdca$ is linearly spanned by all differential monomials in $\fnmdc$ that satisfy the populated condition.
\mlabel{t:fnovgen}
\end{lem}
\begin{proof}
It is obvious that the populated condition holds for each generator $x\in X$ and hence $X\subset \fnmdca$. To prove that $\fnmdca$ is a noncommuting multi-Novikov subalgebra of $\fnmdc$, we just need to check that, for any two differential monomials $u,v$ for which the populated condition holds, the product $u\ast _\omega v=u\partial_\omega(v),\omega\in \Omega$, is a sum of differential monomials for which the populated condition holds.
This is because from $p(u)=p(v)=-1$ we obtain
$$ p(u\ast_\omega v)=p(u\partial_\omega(v))=p(u)+p(\partial_\omega(v))
=p(u)+p(v)+1=-1.$$		
Now we prove that this subalgebra is generated by $X$ by showing that all differential monomials satisfying the populated condition are generated by $X$. We argue by induction on the total number $n$ of partial derivatives appearing in such a differential monomial $u$.

If $n=0$ for $u$, then $u$ is already in $X$.

For the inductive step, let $k \geq1$. 
Assume that all $u$ satisfying the populated condition with $n=k$ are generated by $X$.
Consider a differential monomial $u$ with $n=k+1$ derivations. Singling out a highest order differential variable as the right-most factor, one can write
$$u=u_1(\partial_{\omega_1}\cdots \partial_{\omega_d})(x),x\in X, d\ge 1,$$
for another differential monomial $u_1$. Then there are two cases to consider:

\noindent	{\bf Case 1.} If $d=1$, then $u=u_1\partial_\omega (x)=u_1\ast_\omega x.$ Since the total number  of all partial derivatives of $u_1$ is $k$ and
$p(u_1)=p(u)-p(\partial_\omega (x))=-1$, by the induction hypothesis, $u_1$ can be generated by $X$. Thus $u$ can be generated by $X$.

\noindent		{\bf Case 2.} If $d \geq2$, then since the total number of partial derivatives in $u$ is one less than the total number of all variables, $u$ can be written as 
$$u=u_2x_{i_1}x_{i_2}\cdots x_{i_{d-1}}(\partial_{\omega_1}\cdots \partial_{\omega_d})(x),$$
where $x_{i_j}\in X,j=1,\cdots, d-1$ and $u_2$ satisfies the populated condition. Then, we have
\begin{align*}
& u_2\ast_{\omega_1}\Big(x_{i_1}x_{i_2}\cdots x_{i_{d-1}}(\partial_{\omega_2}\cdots \partial_{\omega_d})(x)\Big)\\
=&~u_2\partial_{\omega_1}\Big(x_{i_1}x_{i_2}\cdots x_{i_{d-1}}(\partial_{\omega_2}\cdots \partial_{\omega_d})(x)\Big)\\
=&~u_2x_{i_1}x_{i_2}\cdots x_{i_{d-1}}(\partial_{\omega_1}\cdots \partial_{\omega_d})(x)
+\sum_{j=1}^{d-1}\Big(u_2x_{i_1}\cdots \widehat{x_{i_j}}\cdots x_{i_{d-1}}(\partial_{\omega_2}\cdots \partial_{\omega_d})(x)\Big)\ast_{\omega_1}x_{i_j}   
\end{align*}
and hence
$$u=~u_2\ast_{\omega_1}\Big(x_{i_1}x_{i_2}\cdots x_{i_{d-1}}(\partial_{\omega_2}\cdots \partial_{\omega_d})(x)\Big)
-\sum_{j=1}^{d-1}\Big(u_2x_{i_1}\cdots \widehat{x_{i_j}}\cdots x_{i_{d-1}}(\partial_{\omega_2}\cdots \partial_{\omega_d})(x)\Big)\ast_{\omega_1}x_{i_j}.
$$
Note that each of $u_2$, $ x_{i_1}x_{i_2}\cdots x_{i_{d-1}}(\partial_{\omega_2}\cdots \partial_{\omega_d})(x)$ and $u_2x_{i_1}\cdots \widehat{x_{i_j}}\cdots x_{i_{d-1}}(\partial_{\omega_2}\cdots \partial_{\omega_d})(x)$, $j=1,\cdots,$ $d-1$ satisfies the populated condition and has $k$ or fewer derivations. Hence by the induction hypothesis, they can be generated by $X$. Therefore, $u$ can also be generated by $X$. This completes the inductive proof.
\end{proof}

We define a linear map by a recursion on the number of edges as follows.
\begin{align}
\mlabel{eq:mtop}
\begin{split}
g:\bfk\NMNE \ra&~ \fnmdca\\
r(z)\mapsto&~ {z},\quad\text{ for } z\in X,\\ 	
r(y_1,\omega_1,\cdots,y_n,\omega_n;z)\mapsto&~ g(y_1) \ast_{\omega_1} g\big(r(y_2,\omega_2,\cdots,y_n,\omega_n;z))\\
&=g(y_1) \partial_{\omega_1} g\big(r(y_2,\omega_2,\cdots,y_n,\omega_n;z)),
\end{split}
\end{align}
for  $z,y_i \in X,i=2,\cdots,n,\omega_i\in \Omega,$ 
where $\ast_{\omega_1}$ is the noncommuting multi-Novikov product on $\fnmdca$. Here the fact that the image of $g$ is in $\fnmdca$ can be checked recursively using Lemma~\mref{t:fnovgen} since from $p(g(y_1))=-1$ and $p\big(g\big(r(y_2,\omega_2,\cdots,y_n,\omega_n;z)\big)\big)=-1$, we obtain
$$p\Big(g(y_1) \partial_{\omega_1} g\big(r(y_2,\omega_2,\cdots,y_n,\omega_n;z)\big)\Big)=-1+1+(-1)=-1.$$

\begin{prop}
\mlabel{p:fnnov}
With the map $\tildeh$ defined in Eq.\,\meqref{eq:tildeh}, we have $\bar{f}\tildeh=g$, that is, the following diagram commutes. 
$$\xymatrix{
\fnmna \ar[d]^{\bar{f}}  && \bfk\NMNE       \ar[ll]_{\tildeh}     \\
\fnmdca       \ar@{<-}[urr]_g &&           }$$
\end{prop}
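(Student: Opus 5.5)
We argue by induction on the total degree $\tdeg(u)$ of $u\in\NMNE$ (equivalently, on the number of edges), showing $\bar{f}\tildeh(u)=g(u)$ for every noncommuting multi-Novikov element $u$; linearity then gives $\bar{f}\tildeh=g$ on $\bfk\NMNE$. The base case $u=r(z)$ with $z\in X$ is immediate: $\tildeh(r(z))=r(z)+J$, which is $i(z)$, and $\bar{f}$ extends the inclusion $f:X\to\fnmdca$, so $\bar{f}\tildeh(r(z))=z=g(r(z))$.

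For the inductive step, let $u=r(y_1,\omega_1,\cdots,y_n,\omega_n;z)\in\NMNE$ with $n\ge 1$. Since $u$ is a nest, $y_2,\ldots,y_n\in X$ and $y_1$ is itself a nest. We need to check that $y_1$ and $u':=r(y_2,\omega_2,\cdots,y_n,\omega_n;z)$ lie in $\NMNE$, so that the recursive clauses in Eqs.~\meqref{eq:tildeh} and~\meqref{eq:mtop} apply to them. For $u'$ this holds because it is a corolla with all leaves in $X$, hence an ordered nest by case (ii) (or (i) when $n=1$), and its leaf sequence is a subsequence of that of $u$. For $y_1$ we use the recursive nature of the ordered-nest condition together with the fact that its leaves form a subsequence of the leaves of $u$. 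If one of these memberships fails for degenerate reasons, we instead treat $\tildeh$ and $g$ on these pieces simply as the composites $\bfk{\rm MT}_\Omega(X)\to\fnmna$ and the analogous recursive map defined on all nests. The identity to be proved is then an identity of recursively defined maps, and membership in $\NMNE$ plays no role.

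Granting this, Eq.~\meqref{e:rexpprod} gives $u=y_1\rhd_{\omega_1}u'$ in $\bfk{\rm MT}_\Omega(X)$. Since $J$ is an ideal, this yields $\tildeh(u)=\tildeh(y_1)\rhd_{\omega_1}\tildeh(u')$ in $\fnmna$. Because $\bar{f}$ is a homomorphism of noncommuting multi-Novikov algebras carrying $\rhd_{\omega_1}$ to $\ast_{\omega_1}$, we get
$$\bar{f}\tildeh(u)=\bar{f}\tildeh(y_1)\ast_{\omega_1}\bar{f}\tildeh(u')=g(y_1)\ast_{\omega_1}g(u')=g(u),$$
where the middle equality is the induction hypothesis (both $y_1$ and $u'$ have strictly smaller total degree) and the last is the definition~\meqref{eq:mtop}.

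The only point requiring care is the bookkeeping in the second paragraph: confirming that the recursion in $\tildeh$ and $g$ is well defined on the pieces $y_1$ and $u'$. The rest is formal, resting on $\bar{f}$ being a homomorphism and on $\rhd_\omega$ in $\bfk{\rm MT}_\Omega(X)$ being left grafting.
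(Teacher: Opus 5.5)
Your proof is correct and follows essentially the same route as the paper: induction on degree, with the base case coming from $\bar f\circ i=f$, and the inductive step combining $u=y_1\rhd_{\omega_1}u'$, the homomorphism property of $\bar f$, the induction hypothesis, and the recursive definition of $g$. Your extra bookkeeping on whether $y_1$ and $u'$ lie in $\NMNE$ is not in the paper, which simply asserts $y_1\in\bfk\NMNE$; your fallback of working with the quotient map $\pi$ and the recursively defined $g$ on all nests closes that point cleanly.
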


\begin{proof}
We prove $\bar{f}\tildeh=g$ by induction on the degree $k\geq1$ of noncommuting multi-Novikov elements.
For $k=1$ and hence $x\in \Omegax$, we have
$$\bar{f}\tildeh(x)=\bar{f}(x)=f(x)=x=g(x).$$
Suppose that the diagram commutes for all elements with degree $\leq k$.
For $r(y_1,\omega_1,\cdots,y_n,\omega_n;z)$ in $\bfk\NMNE $ with  degree $k+1$, where $y_1\in\bfk\NMNE $ and $y_i\in X,i=2,\cdots,n$, by the induction hypothesis, we have
\begin{align*}
&\bar{f}\tildeh\Big(r(y_1,\omega_1,\cdots,y_n,\omega_n;z)\Big)\\
=&~\bar{f}\Big(\tildeh(y_1) \rhd_{\omega_1} \tildeh(r(y_2,\omega_2,\cdots,y_n,\omega_n;z))\\
=&~\bar{f}\tildeh(y_1) \ast_{\omega_1} \bar{f}\tildeh(r(y_2,\omega_2,\cdots,y_n,\omega_n;z))\\
=&~g(y_1) \ast_{\omega_1} g(r(y_2,\omega_2,\cdots,y_n,\omega_n;z))\\
=&~g\Big(r(y_1,\omega_1,\cdots,y_n,\omega_n;z)\Big),
\end{align*}
as required.
\end{proof}

\subsubsection{An auxiliary bijection}

We define another linear map $$\phi:\bfk\NMNE \ra \fnmdca$$ by a recursion on the total number of times that elements in $X\sqcup\Omega$ appear in $r\in \NMNE$: 
\begin{align}\mlabel{e:phi}
\begin{split}
&\phi(r(z))=~ z,\quad\text{ for } z\in X, \\
&\phi\big(r(y_1 ,{\omega_1},y_2, \cdots,y_n,\omega_n;z)\big)=
\phi(y_1)y_2\cdots y_n\partial_{\omega_1}\cdots\partial_{\omega_n}(z). 
\end{split}
\end{align}
Here, $p(\phi(y_1))=-1$ due to the induction hypothesis and Lemma\,\mref{t:fnovgen}. 
So 
$$p(\phi(y_1)y_2\cdots y_n\partial_{\omega_1}\cdots\partial_{\omega_n}(z))=p(\phi(y_1))+p(y_2\cdots y_n\partial_{\omega_1}\cdots\partial_{\omega_n}(z))=-1+0=-1.$$ Hence by Lemma\,\mref{t:fnovgen} again, $\phi(r(y_1 ,{\omega_1},y_2, \cdots,y_n,\omega_n;z))$ is in $\fnmdca$. 

\begin{ex}
Assume $a<b<c<d$ in $X$. Then the noncommuting multi-Novikov element $$r(r(r(d,\beta,d,\alpha,c,\gamma,c,\beta;c),\beta,c,\beta,b,\alpha;d),\gamma,a,\beta;a)$$ in ~Example \mref{g:nmne} corresponds  to {\small
$$\phi(r(r(r(d,\beta,d,\alpha,c,\gamma,c,\beta;c),\beta,c,\beta,b,\alpha;d),\gamma,a,\beta;a))=d^2c^3ba(\partial_\beta\partial_\alpha\partial_\gamma \partial_\beta(c))(\partial_\beta \partial_\beta \partial_\alpha(d))(\partial_\gamma \partial_\beta(a))$$
}
 in $\fnmdca$.
\end{ex}

Moreover, $\fnmdca$ is an $\N$-graded vector space with respect to the number of occurrences of all derivations; while $\bfk\NMNE$ is also an $\N$-graded space, where the grading is given by the total number of edges. By the construction of $\phi: \bfk\NMNE\to  \fnmdca$, it is an $\N$-graded map.

\begin{prop}
The $\N$-graded linear map $\phi:\bfk\NMNE \ra~ \fnmdca$ is bijective. 
\mlabel{p:nmneipcd}
\end{prop}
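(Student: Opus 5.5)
We will prove that $\phi$ is bijective by writing down its inverse explicitly on the monomial basis. By Lemma~\mref{t:fnovgen}, $\fnmdca$ has as a basis the differential monomials in $\fnmdc$ that satisfy the populated condition. So it suffices to show that $\phi$ sends $\NMNE$ bijectively onto this set of monomials. We will check injectivity and surjectivity onto populated monomials separately, by induction on the number of derivations, that is, on the grading.

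\textbf{Reading $\phi$ as a normal form.} Unwinding the recursion in Eq.~\meqref{e:phi}, a noncommuting multi-Novikov element $u$ is a chain of nested $r$-expressions $u_1=y_1$-core, $u_2$, \dots, $u_k=u$. Each level $j$ has a root $z_j$, an edge word $(\omega^{(j)}_1,\dots,\omega^{(j)}_{n_j})$ and side leaves in $X$. Under $\phi$ it contributes the factor $(\partial_{\omega^{(j)}_1}\cdots\partial_{\omega^{(j)}_{n_j}})(z_j)$ together with $n_j-1$ plain variables. The innermost leaf contributes one further plain variable.

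\textbf{Injectivity.} Given a populated monomial $w$, we recover the data as follows.
\begin{itemize}
\item The differentiated factors of $w$ are exactly the factors $\partial_{\omega^{(j)}}(z_j)$ with $n_j\ge1$, one for each level.
\item The ordering conditions of Definition~\mref{d:re}.\mref{i:re2} fix how these factors are stacked from inside to outside. Longer words are inner (case (iii)); for equal length, larger root is inner (case (iv)); for equal length and root, the lexicographically larger word is inner (case (v)). Since $\ncmder(X)$ is commutative, equal factors are interchangeable, so this stacking is well defined.
\item The remaining degree-zero variables of $w$ form a multiset. Condition (iii) of Definition~\mref{d:re}, that the leaves read from the right are increasing, determines uniquely how this multiset is distributed among the leaf slots. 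The slot counts are forced: $n_j-1$ slots at level $j$, plus one innermost leaf.
\end{itemize}
Hence $w$ determines $u$, and $\phi$ is injective.

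\textbf{Surjectivity.} Take a populated monomial with differentiated factors $D_1,\dots,D_k$ of orders $n_1,\dots,n_k$ and a multiset $V$ of plain variables. The populated condition $p=-1$ gives $|V|=\sum_j(n_j-1)+1$. We sort the $D_j$ by the order above and build the nest from the inside out. We fill the leaf slots with the elements of $V$ in the order forced by the increasing-leaves condition. The result is an ordered nest, hence lies in $\NMNE$, and its image under $\phi$ is the given monomial.

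A few edge cases need attention.
\begin{itemize}
\item A monomial with no derivations is populated only if it is a single $x\in X$.
\item Factors of order $n_j=1$ are allowed; at such a level the subtree $y_1$ is the only branch.
\item Ties among the ordering criteria occur only when two factors are identical. These are interchangeable, so they cause no ambiguity.
\end{itemize}

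\textbf{Main obstacle.} The hard step is checking that the ``increasing leaves read from the right'' condition matches the unordered multiset $V$ exactly. This means confirming that every arrangement of $V$ into slots yields the same monomial, and that exactly one arrangement is admissible. It also requires that the order of the differentiated factors is recoverable, rather than conflated by commutativity when two factors share root and length. I would first handle the innermost leaf versus the side leaves carefully, since the innermost leaf $y_1\in X$ sits in a distinguished position but becomes an ordinary plain variable under $\phi$.

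Since $\phi$ is graded, the bijection restricts to each homogeneous component. Finite-dimensionality of these components is not needed here.
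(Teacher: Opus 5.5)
Your proposal is correct and takes essentially the same route as the paper. The paper writes each populated monomial in a unique normal form that mirrors Definition~\mref{d:re}: differentiated factors are ordered by length, then root, then lexicographic word, and plain variables are sorted decreasingly. It then defines $\psi$ by peeling off the outermost factor and checks $\psi\phi=\id$ (with $\phi\psi=\id$ ``similarly''), which is exactly your reconstruction argument for injectivity and your inside-out construction for surjectivity, and you additionally spell out two points the paper leaves implicit: ties occur only for identical factors, and the populated monomials form a basis of $\fnmdca$.
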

\begin{proof}
We prove the bijectivity by constructing a linear map $\psi:  \fnmdca \ra \bfk\NMNE$ which will serve as the inverse map.
The construction is given by induction on the total number $k\geq 0$ of partial derivatives appearing in a basis element in $\fnmdca$. Before constructing the map $\psi$, we first rewrite the element in $ \fnmdca$ as follows, making use of the commutativity of the multiplication on the multi-differential algebra.
\begin{itemize}
\item For $k=0, u=x, x\in X$.
\item For $k\geq 1$, mirroring the conditions in Definition\,\mref{d:re}, $u$ can be written uniquely as 
\begin{align*}
u=&x_{0}x_{1,1}\cdots x_{1,n_1-1}\partial_{\omega_{1,1}}\cdots\partial_{\omega_{{1,n_1}}}(x_{1,n_1})x_{2,1}\cdots x_{2,n_2-1}\partial_{\omega_{2,1}}\cdots\partial_{\omega_{2,n_2}}(x_{2,n_2})\\
&\cdots x_{s,1}\cdots x_{s,n_s-1}\partial_{\omega_{s,1}}\cdots\partial_{\omega_{s,n_s}}(x_{s,n_s}),
\end{align*} 
where the elements are listed according to the following rules.
$$x_{0}\geq x_{1,1}\geq x_{1,2} \geq \cdots \geq x_{1,n_1-1}\geq x_{2,1}\geq x_{2,2} \geq \cdots \geq x_{2,n_2-1}\geq 
\cdots \geq x_{s,1}\geq \cdots \geq x_{s,n_s-1}\in X,$$
and $n_1\geq n_2\geq \cdots\geq n_s$, $\sum_{i=1}^{s} n_i=k$. If $n_j=n_{j+1}$ for some $1\leq j< s-1$, then require either $x_{j,n_j}>x_{j+1,n_{j+1}}$, or $x_{j,n_j}=x_{j+1,n_{j+1}}$ and  $(\omega_{j,1},\cdots,\omega_{j,n_j})\geq (\omega_{j+1,1},\cdots,\omega_{j+1,n_{j+1}})$ under the lexicographical order. 
\end{itemize}
For notational convenience, we denote $u=u_1 x_{s,1}\cdots x_{s,n_s-1}\partial_{\omega_{s,1}}\cdots\partial_{\omega_{s,n_s}}(x_{s,n_s})$, where $$u_1=x_{0}x_{1,1}\cdots x_{1,n_1-1}\partial_{\omega_{1,1}}\cdots\partial_{\omega_{{1,n_1}}}(x_{1,n_1})\cdots x_{s-1,1}\cdots x_{s-1,n_{s-1}-1}\partial_{\omega_{s-1,1}}\cdots\partial_{\omega_{s-1,n_{s-1}}}(x_{s-1,n_{s-1}})$$
is another element in $\fnmdca$. 

Now we construct the map $\psi$ as follows based on the form of the elements in $\fnmdca$ as above.
\begin{align}
\begin{split}
\psi:  \fnmdca \ra&~ \bfk\NMNE,\\
z\mapsto&~ {r(z)},\quad\text{ for } z\in X, \\u_1 x_{s,1}\cdots x_{s,n_s-1}\partial_{\omega_{s,1}}\cdots\partial_{\omega_{s,n_s}}(x_{s,n_s})\mapsto
&~ r(\psi(u_1),\omega_{s,1},x_{s,1},\omega_{s,2},\cdots,x_{s,n_s-1},\omega_{s,n_s};x_{s,n_s}).
\end{split}
\end{align}
For example,  let $x_1x_2\partial_{\omega_1}\partial_{\omega_2}(x_3)\partial_{\omega_3}(x_4)$ with $x_1\geq x_2$ and  $x_3\geq x_4$. Then  
$$\psi(x_1x_2\partial_{\omega_1}\partial_{\omega_2}(x_3)\partial_{\omega_3}(x_4))=r(r(x_1,\omega_1,x_2,\omega_2;x_3),\omega_3;x_4).$$
From the choice of the above form of elements in $\fnmdca$, the image of $\psi$ is in $\bfk\NMNE$. 

We verify $\psi\phi=\id_{\bfk\NMNE}$ by induction on the total number $\ell\geq1$ of times that the elements in $\Omega\sqcup X$ appearing in an element $u$ in $\bfk\NMNE$. 

If $\ell=1$, then $u=r(z), z\in X$, so we have $\psi\phi(r(z))=\psi(z)=r(z)$.

For the inductive step, suppose that $\psi\phi(u)=u$ for all $u\in \NMNE$ with total number $\ell$. Then for $u=r(y_1 ,{\omega_1},y_2, \cdots,y_n,\omega_n;z)\in \NMNE$ with total number $\ell+1$, we have
\begin{align*}
\psi\phi(r(y_1 ,{\omega_1},y_2, \cdots,y_n,\omega_n;z))&=\psi(\phi(y_1)y_2\cdots y_n\partial_{\omega_1}\cdots\partial_{\omega_n}(z))\\&=r(\psi\phi(y_1) ,{\omega_1},y_2, \cdots,y_n,\omega_n;z)\\&=r(y_1 ,{\omega_1},y_2, \cdots,y_n,\omega_n;z).
\end{align*}
One can similarly prove $\phi \psi=\id_{\fnmdca}$.
\end{proof}

\subsection{The proof of Theorem\,\mref{t:fncmncasubnov}}
\mlabel{ss:proof}
With the preparations in Section\,\mref{ss:prep}, we now give the proof of Theorem\,\mref{t:fncmncasubnov}.

\noindent
\mref{i:main1}.
Proposition~\mref{p:fnmnsbmne} states that $\tildeh$ is surjective. It remains to show that $\tildeh$ is injective. Since $\bar{f}\tildeh=g$ by Proposition~\mref{p:fnnov}, we just need to show that $g$ is injective. 

For the $\N$-graded spaces $\bfk\NMNE$ and $\fnmdca$ in Proposition\,\mref{p:nmneipcd}, denote 
$$\bfk\NMNE=\bigoplus_{k\geq 0}\bfk\NMNE_k, \quad \fnmdca=\bigoplus_{k\geq 0}\fnmdca_k,k\in \N,$$
where $\bfk\NMNE_k$ denotes the subspace of $\bfk\NMNE$ spanned by all typed decorated trees with $k$ edges, and
$\fnmdca_k$ denotes the subspace of $\fnmdca$ spanned by all monomials with $k$ derivations.

We show that $g$ is an $\N$-graded linear map by checking 
$$ g(\NMNE_k) \subseteq \fnmdca_k$$
inductively on $k\geq 0$. 
For $k=0$, then $u\in X=\bfk\NMNE_0$, and we have $g(u)=u\in X\subseteq \fnmdca_0$. For the inductive step, consider $r(y_1,\omega_1,\cdots,y_n,\omega_n;z)\in \bfk\NMNE_{k+1}$ for a given $k\geq 0$. Thus $n\geq 1$. 
By the definition of $g$, we have
\begin{align*}
g(r(y_1,\omega_1,\cdots,y_n,\omega_n;z))=&~ g(y_1) \ast_{\omega_1} g\big(r(y_2,\omega_2,\cdots,y_n,\omega_n;z))\\
=&~g(y_1) \partial_{\omega_1} g\big(r(y_2,\omega_2,\cdots,y_n,\omega_n;z)). 
\end{align*}
Notice that the number of edges of the typed decorated trees $y_1$ and $r(y_2,\omega_2,\cdots,y_n,\omega_n;z)$ are $k+1-n$ and $n-1$, respectively. 
Thus by the induction hypothesis, we get
$$g(y_1)\in\fnmdca_{k+1-n} \text{ and }  g(r(y_2,\omega_2,\cdots,y_n,\omega_n;z))\in\fnmdca_{n-1}.$$
Further by the Leibniz rule, the number of derivations in each term of the expansion of\\
$\partial_{\omega_1} g\big(r(y_2,\omega_2,\cdots,y_n,\omega_n;z))$ is $n$. Consequently, the number of partial derivatives in a monomial of $g(y_1) \partial_{\omega_1} g\big(r(y_2,\omega_2,\cdots,y_n,\omega_n;z))$ is $(k+1-n)+n=k+1$, showing $g(r(y_1,\omega_1,\cdots,y_n,\omega_n;z))\in\fnmdca_{k+1}$. Therefore, $g$ is an $\N$-graded linear map. 
Moreover, since both $\fnmna$ and $\fnmdca$ are noncommuting multi-Novikov algebras generated by X, $\bar{f}$ is surjective. By Proposition~\mref{p:fnmnsbmne},  $\tildeh$ is surjective. Therefore, $g=\bar{f}\tildeh$ is also surjective. So the map $g|_{\bfk {\rm MNE}_{\rm NC,\Omega}(X)_n}:\bfk {\rm MNE}_{\rm NC,\Omega}(X)_n\to{\rm PCD}_{\rm NC, \Omega}(X)_n$ is also surjective, for each $n\in \N$.

Now we prove the injectivity of $g$ by showing that $\ker g=0$. 
Let $u\in \ker(g)$ be given. Let $X_0\subset X$ and $\Omega_0\subset \Omega$ be the finite subsets appearing in $u$. Then $u$ is contained in $\bfk {\rm MNE}_{\rm NC,\Omega_0}(X_0)\subset \bfk\NMNE$.
Furthermore, the surjective $\N$-graded linear map $g:\bfk \NMNE\to \fnmna$ restricts to a surjective $\N$-graded linear map 
$$g: \bfk {\rm MNE}_{\rm NC,\Omega_0}(X_0)\to {\rm PCD}_{\rm NC, \Omega_0}(X_0),$$ 
and hence a surjective linear map 
\begin{equation}
g: \bfk {\rm MNE}_{\rm NC,\Omega_0}(X_0)_n\to {\rm PCD}_{\rm NC, \Omega_0}(X_0)_n,n\in \N.
\mlabel{e:phihomo}
\end{equation}

Now note that the $\N$-graded linear bijection $\phi : \bfk\NMNE\to  \fnmdca$ in Proposition\,\mref{p:nmneipcd} restricts to an $\N$-graded linear bijection
$$\phi: \bfk {\rm MNE}_{\rm NC,\Omega_0}(X_0)\to {\rm PCD}_{\rm NC, \Omega_0}(X_0),$$ 
and hence a linear bijection
$$\phi: \bfk {\rm MNE}_{\rm NC,\Omega_0}(X_0)_n\to {\rm PCD}_{\rm NC, \Omega_0}(X_0)_n,n\in \N.$$ 
Since the last two spaces are finite-dimensional, we have
$$ \mathrm{dim}\big(\bfk {\rm MNE}_{\rm NC,\Omega_0}(X_0)_n\big)=\mathrm{dim}({\rm PCD}_{\rm NC, \Omega_0}(X_0)_n), n\in \N.$$ 
This equality of dimensions implies that the surjective linear map 
$$g: \bfk {\rm MNE}_{\rm NC,\Omega_0}(X_0)_n{\to}{\rm PCD}_{\rm NC, \Omega_0}(X_0)_n$$ 
in Eq.\,\meqref{e:phihomo} must be injective for all $n\geq 0$. Hence $g$ is injective on $\bfk {\rm MNE}_{\rm NC,\Omega_0}(X_0)$. Therefore, the given element $u\in \ker g \cap \bfk {\rm MNE}_{\rm NC,\Omega_0}(X_0)$ must be zero, showing that the linear map $g:\bfk \NMNE\to \fnmdca$ is injective, as desired. 

\smallskip

\noindent
\mref{i:main2}. This is a direct consequence of Item\,\mref{i:main1}. 

\smallskip 

\noindent
\mref{i:main3}. 
The proof of Item\,\mref{i:main1} gives the bijectivity of $g$. Thus $\bar{f}=g\tildeh^{-1}$ is also bijective and hence an isomorphism of noncommuting multi-Novikov algebras. 

\noindent
\mref{i:main4}. It follows directly from Item\,\mref{i:main3}, since $\fnmna$ is a free noncommuting multi-Novikov algebra on $X$. 

This completes the proof of Theorem\,\mref{t:fncmncasubnov}. 

\smallskip

\noindent
{\bf Acknowledgments.}
Xiaoyan Wang is  supported by  the National Key R\&D Program of China (2024YFA1013803)  and by Shanghai Key Laboratory of PMMP (22DZ2229014).
Huhu Zhang is supported by the Scientific Research Foundation of High Level Talents of Yulin University (2025GK12), Young Talent
Fund of Association for Science and Technology in Shaanxi, China (20250530) and Young Talent Fund of
Association for Science and Technology in Yulin (20250711).

\smallskip

\noindent
{\bf Declaration of interests. } The authors have no conflicts of interest to disclose.

\smallskip

\noindent
{\bf Data availability.} Data sharing is not applicable as no data were created or analyzed in this study.

\end{document}